\xdef\csname v\a\endcsname {
		{\noexpand\boldsymbol{\a}}
	}
\newcommand{\vone}{{\boldsymbol{1}}}
\newcommand{\vzero}{{\boldsymbol{0}}}
\theoremstyle{plain}% default
\newtheorem{theorem}{Theorem}[section]
\newtheorem{lemma}[theorem]{Lemma}
\newtheorem{prop}[theorem]{Proposition}
\theoremstyle{remark}
\newtheorem{remark}{Remark}
\theoremstyle{assumption}
\newtheorem{assumption}{Assumption}
\theoremstyle{definition}
\newtheorem{definition}{Definition}[section]
\newtheorem{example}{Example}[section]
\begin{document}

\title[]{Networks of reinforced stochastic processes:\\
 probability of asymptotic polarization\\
  and related general results}

\author[G. Aletti]{Giacomo Aletti}
\address{ADAMSS Center,
  Universit\`a degli Studi di Milano, Milan, Italy}
\email{giacomo.aletti@unimi.it}

\author[I. Crimaldi]{Irene Crimaldi}
\address{IMT School for Advanced Studies Lucca, Lucca, Italy}
\email{irene.crimaldi@imtlucca.it}

\author[A. Ghiglietti]{Andrea Ghiglietti}
\address{Universit\`a degli Studi di Milano-Bicocca, Milan, Italy}
\email{andrea.ghiglietti@unimib.it (Corresponding author)}

\begin{abstract}
In a network of reinforced stochastic processes, for certain values of the parameters, all the agents' inclinations synchronize and converge almost surely toward a certain random variable. The present work aims at clarifying when the agents can \emph{asymptotically polarize}, i.e. when the common limit inclination can take the extreme values, 0 or 1, with probability zero, strictly positive, or equal to one.
Moreover, we present a suitable technique to estimate this probability that, along with the theoretical results, has been framed in the  more general setting of a class of martingales taking values in $[0,1]$ and following a specific dynamics. 
\\

\noindent {\em Key-words:} interacting random systems,
network-based dynamics, reinforced stochastic processes, urn models, 
martingales, polarization, touching the barriers, opinion dynamics, simulations. 
\end{abstract}

\maketitle

\section{Introduction: setting and scope}
\label{intro}

One of the main problems in Network Theory (e.g. \cite{alb-bar, new, hof}) is
to understand if the dynamics of the agents of the network will lead to some form of  
{\em synchronization} of their behavior (e.g. \cite{are}). 
A specific form of synchronization is the {\em polarization}, 
that can be roughly defined as the 
 positioning of all the network agents on one of two extreme opposing statuses. 
 The present work is placed in the recent stream of mathematical literature which studies 
the phenomena of synchronization and polarization for networks of agents 
whose behavior is driven by a {\em reinforcement} mechanism
(e.g. 
 \cite{ale-cri-ghi, ale-cri-ghi-MEAN, ale-cri-ghi-WEIGHT-MEAN, ale-cri-ghi-complete, 
 ale-ghi, ben, che-luc, cri-dai-lou-min, cri-lou-min, super-urn-2,  
 KauSah21, lima, sah}). 
 Specifically, we suppose to have a finite directed graph $G=(V,\, E)$, where 
 $V=\{1,...,N\}$, with $N\geq 2$, is the set of vertices, that is the network agents, 
 and $E\subseteq V\times V$ is the set of edges, where  
 each edge $(l_1,l_2)\in E$ represents the fact
 that agent $l_1$ has a direct influence on the agent $l_2$. We
 also associate a deterministic weight $w_{l_1,l_2}\geq 0$ to each pair
 $(l_1,l_2)\in V\times V$ in order to quantify how much $l_1$ can
 influence $l_2$ (a weight equal to zero means that the edge is not
 present). We define the matrix $W$, called in the sequel {\em
   interaction matrix}, as $W=[w_{l_1,l_2}]_{l_1,l_2\in V\times V}$
 and we assume the weights to be normalized so that $\sum_{l_1=1}^N
 w_{l_1,l_2}=1$ for each $l_2\in V$. Regarding the behavior of the agents, 
 we suppose that at each time-step they have to make a
choice between two possible actions $\{0,1\}$. For any $n\geq 1$, the
random variables $\{X_{n,l}:\,l\in V\}$ take values in $\{0,1\}$ and
they describe the actions adopted by the agents at time-step $n$. The dynamics 
is the following: for each $n\geq 0$, the random
 variables $\{X_{n+1,l}:\,l\in V\}$ are
 conditionally independent given ${\mathcal F}_{n}$ with
\begin{equation}\label{interacting-1-intro}
P(X_{n+1,l}=1\, \vert \, {\mathcal F}_n)=\sum_{l_1=1}^N w_{l_1,l} Z_{n,l_1}
\qquad\mbox{a.s.}\,,
\end{equation}
where, for each $l\in V$,
\begin{equation}\label{interacting-2-intro}
Z_{n,l}=(1-r_{n-1})Z_{n-1,l}+r_{n-1}X_{n,l},
\end{equation}
with $0\leq r_n<1$, $\{Z_{0,l}:\,l\in V\}$ random variables with
values in $[0,1]$ and ${\mathcal F}_n=\sigma(Z_{0,l}: l\in V)\vee
\sigma(X_{k,l}: 1\leq k\leq n,\,l\in V )$. Each random variable $Z_{n,l}$ takes values in
$[0,1]$ and it can be interpreted as the ``personal inclination'' of
the agent $l$ of adopting ``action 1''. Thus, Equation \eqref{interacting-1-intro} 
means that the probability that the
agent $l$ adopts ``action 1'' at time-step $(n+1)$ is given by a
convex combination of $l$'s own inclination and the inclination of the
other agents at time-step $n$, according to the ``influence-weights''
$w_{l_1,l}$. Note that we have a reinforcement mechanism for the
personal inclinations of the agents: indeed, by
\eqref{interacting-2-intro}, whenever $X_{n,l}=1$, we have
a strictly positive increment in the personal
  inclination of the agent $l$, that is $Z_{n,l}> Z_{n-1,l}$ (provided
  $Z_{n-1,l}<1$) and, in the case $w_{l,l}>0$ (which is the
  most usual in applications), this fact results in a greater
  probability of having $X_{n+1,l}=1$. 
  \\ \indent To express the
above dynamics in a compact form, let us define the vectors
$\boldsymbol{X}_{n}=(X_{n,1},..,X_{n,N})^{\top}$ and
$\boldsymbol{Z}_{n}=(Z_{n,1},..,Z_{n,N})^{\top}$.  Hence, for $n\geq
0$, the dynamics described by \eqref{interacting-1-intro} and
\eqref{interacting-2-intro} can be expressed as follows:
\begin{equation}\label{eq:cond-mean}
E[\boldsymbol{X}_{n+1}\vert \mathcal{G}_{n}]=W^{\top}\,\boldsymbol{Z}_{n},
\end{equation}
and 
\begin{equation}\label{eq:dynamics}
\boldsymbol{Z}_{n+1}\ =
\left(1-r_{n}\right)\boldsymbol{Z}_{n}\ +\ r_{n}\boldsymbol{X}_{n+1}.
\end{equation}
Moreover, the assumption about the normalization of the matrix $W$ can
be written as $W^{\top}\boldsymbol{1}=\boldsymbol{1}$, where
$\boldsymbol{1}$ denotes the vector with all the entries equal to $1$.
\\ \indent The recent paper \cite{ale-cri-ghi-complete} provides 
the sufficient and necessary conditions in order to have the
almost sure asymptotic synchronization of all the agents' inclinations, that is the
almost sure convergence toward zero of all the differences
$(Z_{n,l_1}-Z_{n,l_2})_n$, with $l_1,\,l_2\in V$. This phenomenon has been called 
{\em  complete almost sure asymptotic synchronization} and, in the considered setting, 
it is equivalent to the almost sure convergence of all the inclinations 
$(Z_{n,l})_n$, with $l\in V$, toward a certain common random variable $Z_\infty$. 
Under the assumption that $W$ is irreducible (i.e.~$G=(V,E)$ is a strongly connected graph)
and $P(\vZ_0\in\{\vzero,\vone\})<1$\footnote{Similarly to the
  notation $\vone$ already mentioned above, the symbol $\vzero$
  denotes the vector with all the entries equal to $0$.} (in order to
exclude the trivial initial conditions), in \cite{ale-cri-ghi-complete} it has been proven that:
\begin{itemize}
\item[(i)] when $W^{\top}$ is aperiodic, the complete almost sure asymptotic
  synchronization holds true if and only if $\sum_n r_n =+\infty$;
\item[(ii)] when $W^{\top}$ is periodic, the complete almost sure asymptotic
  synchronization holds true if and only if $\sum_n r_n(1-r_n) =+\infty$.
\end{itemize}
In the case of complete almost sure asymptotic synchronization, 
in order to provide a full description of the asymptotic dynamics of the network, 
we here deal with the phenomenon of 
non-trivial {\em asymptotic polarization}, i.e. 
with the question when the common random limit $Z_\infty$ can touch
the barrier-set $\{0, 1\}$ with a strictly positive probability, starting from 
$\vZ_0 \notin \{\vzero, \vone\}$. As we will show,  
this probability depends on how large is the weight of the new
information with respect to the present status of the process $(\vZ_n)_n$.
Indeed, looking at the dynamics
\eqref{eq:dynamics}, we can interpret the terms $r_n$ and $(1-r_n)$ as
the weights associated, respectively, to the new information
$\vX_{n+1}$ and to the present status $\vZ_n$, in the definition of
the next status $\vZ_{n+1}$ of the process. Moreover, the quantity
$\prod_{k=0}^n(1-r_k)$ can be seen as the weight associated to the
entire history of the process until time-step $n$, and so it can be
taken as a measure of the memory of the process at time-step
$n$. Under the conditions that ensure the complete almost sure
asymptotic synchronization (note that, in particular, this means that $\sum_n r_n=+\infty$), 
in the non-trivial case $P(\vZ_0 \notin
\{\vzero, \vone\})>0$, we can have different scenarios for the
probability of asymptotic polarization of the network. 
In particular,
adding the condition  
$r_n = O( \exp( - \sum_{k=0}^n r_k)\sum_{k=0}^n r_k  )$, or equivalently $r_n = O(  \prod_{k=0}^n(1-r_k) \sum_{k=0}^n r_k )$,
in order to bound the impact of the new information with respect to the past, 
we guarantee that the probability of non-trivial asymptotic polarization is zero (see Theorem \ref{th1}). 
On the contrary, if we add conditions in order to bound 
the impact of the history, we allow to have a strictly positive 
probability of non-trivial asymptotic polarization: specifically, we refer to
condition $\sum_n \prod_{k=0}^n (1-r_k)<+\infty$, which assures that the probabilities 
$P(Z_\infty=z|\vZ_0\notin \{\vzero,\vone\})$, with $z=0$ or $z=1$, are both 
strictly positive (see Theorem \ref{th2}).   
Finally, condition $\sum_n r_n^2<+\infty$ is enough to
avoid that the probability of asymptotic polarization is equal to
one (see Theorem \ref{th3}$(i)$). Indeed, this condition ensures that the weight of the new
information decreases to zero rapidly enough. Then, we can argue that,
since the contribution of the past in defining the next status of the
system remains relevant, the process $(\vZ_n)_n$ can stay close to its
initial value $\vZ_0$, and this, since $P(\vZ_0 \notin \{\vzero,
\vone\})>0$, forces $Z_\infty\in (0,1)$ with a strictly positive
probability.  On the contrary, when $\sum_n r_n^2=+\infty$, the limit
$Z_\infty$ touches the barriers with probability one and so it is a
Bernoulli random variable with parameter depending on the initial
random variable $\vZ_0$ (see Theorem \ref{th3}$(ii)$).  
In particular, the above results fully characterize the probability of 
non-trivial asymptotic polarization  in the case when 
 there exist $c>0$ and $0<\gamma\leq 1$ such that $\lim_n n^\gamma r_n=c$ and 
$\sum_n (r_n-cn^{-\gamma})$ is convergent, which is the setting of the results proven 
in \cite{ale-cri-ghi, ale-cri-ghi-MEAN, ale-cri-ghi-WEIGHT-MEAN, cri-dai-lou-min}.  
Table \ref{table-summary} summarizes the different scenarios according to 
the values for $\gamma$ and $c$.
\\

\begin{table}[ht]
\centering 
\begin{tabular}{c| c| c| c} 
\hline 
 Parameters & $0<\gamma\leq 1/2$ & $1/2<\gamma<1$ & $\gamma=1$ \\  
\hline 
$0<c\leq1$ & $=1$ & $\in(0,1)$ & $=0$ \\
\hline
$c>1$   & $=1$ & $\in(0,1)$ & $\in(0,1)$ \\
\hline 
\end{tabular}
\\[5pt]
\caption{Probability of non-trivial asymptotic polarization: 
possible scenarios for the case  when $\lim_n n^\gamma r_n=c$ and 
$\sum_n(r_n-cn^{-\gamma})$ is convergent. 
Specifically, when it is strictly positive, we have 
$P(Z_\infty=z| \vZ_0\notin\{\vzero,\,\vone\})>0$ for both $z=0$ and $z=1$.}
\label{table-summary}
\end{table}

\indent When the probability of non-trivial asymptotic polarization is in $(0,1)$,  
an interesting problem is to find statistical tools,  
based on the observation of the system until a certain time-step, in order to 
determine, up to a small probability, if the system will polarize in the limit. 
This paper deals with this question and provides a suitable technique, which is essentially 
based on concentration inequalities and Monte Carlo methods. Moreover, we 
use the provided estimators for the probability of asymptotic polarization to
define an asymptotic confidence interval for the random variable $Z_\infty$. 
The statistical tools illustrated in this work complete the more classical ones obtained in 
\cite{ale-cri-ghi, ale-cri-ghi-MEAN, ale-cri-ghi-WEIGHT-MEAN} 
by means of central limit theorems under the conditional probability
$P(\cdot \,|\, 0<Z_\infty<1)$. Indeed, when $Z_\infty$ takes values in $\{0,1\}$, these  
central limit theorems become convergences in probability to zero and so they are not useful in order to obtain 
the desired confidence interval for $Z_\infty$ under $P$.     
 The problem of making inference without excluding the case when the random limit $Z_\infty$
 belongs to $\{0,1\}$ is not covered by the urn model literature either. 
\\
 \indent Finally, we point out that we present   
 the theoretical results and the estimation technique  in the general setting of a stochastic process   
$M=(M_n)_{n\geq 0}$ that takes values in $[0,1]$ and is a martingale with respect to  
some  filtration
$\mathcal G=({\mathcal G}_n)_n$
 with the dynamics 
\begin{equation}\label{mart-dyn-intro}
M_{n+1}=(1-r_n)M_n+r_nY_{n+1},\quad n\geq 0,
\end{equation}
where $Y_{n+1}$ takes values in $[0,1]$ and $E[Y_{n+1}\vert {\mathcal
    G}_n]=M_n$ a.s. 
In particular, when
the probability of touching the barrier-set $\{0,1\}$ in the limit, given $0<M_0<1$, belongs to $(0,1)$,  
our scope is to find an estimator of this probability and 
  to construct an asymptotic confidence interval for the almost sure limit $M_\infty$ of $(M_n)_n$,  
  based on the information ${\mathcal G}_n$ collected until a certain time-step $n$. 
  This general framework can cover many other contexts in addition to the one presented above (e.g. \cite{LamPagTar04}).
\\
  
\indent The sequel of the paper is so structured. 
Section \ref{sec-mart} presents the results about the probability of touching the barrier-set $\{0, 1\}$ in the limit 
for a martingale with dynamics \eqref{mart-dyn-intro}.  In 
Section \ref{sec:barriers} we state the results of the asymptotic
polarization of a network of reinforced stochastic processes, i.e. 
we deal with the problem of touching
the barriers for the random variable $Z_\infty$ when the
complete almost sure asymptotic synchronization holds true.  
In Section~\ref{sec:estimation} we present the estimation technique 
for the probability of touching the barriers in the limit 
for a martingale with dynamics \eqref{mart-dyn-intro}.
Then, in Section~\ref{sec:interval} we construct a confidence interval for the limit random variable $M_\infty$,  
given the information collected until a certain time-step. 
Finally, in Section~\ref{sec:RSPs_simulations} the provided methodology is applied in the   
framework of a network of reinforced stochastic processes and
some simulation results are shown. In the appendix we give some recalls and technical details.

%%%%%%%%%%%%%%%%%%%%%%%%%%%

\section{Probability of touching the barriers in the limit for a class of martingales}
\label{sec-mart}

Consider a stochastic process $\mathcal{W} = (\mathcal{W}_n)_{n\geq 0}$ 
taking values in the interval $[0, 1]$ and following the
dynamics
\begin{equation}\label{rw-dyn}
\mathcal{W}_{n+1} = (1 - r_n )\mathcal{W}_n + r_n Y_{n+1},\quad n \geq 0,
\end{equation}
where $0\leq r_n<1$ and $Y_{n+1}$ takes values in $[0, 1]$. 
\\

The next proposition establishes a relationship between the above dynamics and 
the evolution of an urn model. 
In the particular case of $Y_{n+1}\in\{0,1\}$ and 
$E[Y_{n+1}|\mathcal{W}_0,Y_1,\dots,Y_n]=\mathcal{W}_n$,  
from this result we get that a 
single reinforced stochastic process corresponds to a time-dependent 
P\'olya urn \cite{pemantle-time-dependent}.  

\begin{prop}[Correspondence with an urn model]\label{th-corresp}
For each $n\geq 0$, the random variable $\mathcal{W}_n$ corresponds to the
proportion of balls of color $A$ inside the urn at time-step $n$
for a two-color urn process where the number of balls of color $A$
(resp. $B$) added to the urn at time-step $n\geq 1$ is $\alpha_{n}Y_n$
(resp. $\alpha_n(1-Y_n)$) with
\begin{equation}\label{eq:alpha_urn_model}
\alpha_{n} =
s_0\frac{r_{n-1}}{\prod_{k=0}^{n-1} (1-r_k)},
\end{equation}
where $s_0>0$ is an arbitrary constant.\\
\end{prop}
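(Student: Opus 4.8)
The plan is to construct the urn explicitly and check that the proportion of balls of color $A$ reproduces the dynamics \eqref{rw-dyn}. I would write $a_n$ and $s_n$ for the number of balls of color $A$ and the total number of balls present at time-step $n$, so that the target proportion is $a_n/s_n$, and I would fix the initial total to be the arbitrary constant $s_0>0$. Since at each step $n\geq 1$ we add $\alpha_n Y_n$ balls of color $A$ and $\alpha_n(1-Y_n)$ balls of color $B$, the total number of \emph{added} balls is the deterministic quantity $\alpha_n$, whatever the realized value of $Y_n$. This yields the pair of recursions
\begin{equation*}
a_n=a_{n-1}+\alpha_n Y_n,\qquad s_n=s_{n-1}+\alpha_n,
\end{equation*}
and in particular the total count $s_n$ evolves deterministically.

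First I would compute $s_n$ in closed form. Inserting the proposed value \eqref{eq:alpha_urn_model} of $\alpha_n$ into $s_n=s_{n-1}+\alpha_n$ and arguing by induction, the increment telescopes: assuming $s_{n-1}=s_0/\prod_{k=0}^{n-2}(1-r_k)$, the factor $1+r_{n-1}/(1-r_{n-1})=1/(1-r_{n-1})$ gives $s_n=s_0/\prod_{k=0}^{n-1}(1-r_k)$. Two consequences of this closed form drive the whole argument, namely the identities
\begin{equation*}
s_{n-1}=(1-r_{n-1})\,s_n
\qquad\text{and}\qquad
\alpha_n=s_n-s_{n-1}=r_{n-1}\,s_n.
\end{equation*}

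With these in hand the verification is immediate. Using $a_{n-1}=(a_{n-1}/s_{n-1})\,s_{n-1}=(1-r_{n-1})\,s_n\,(a_{n-1}/s_{n-1})$ together with $\alpha_n=r_{n-1}s_n$, I would substitute into the proportion to obtain
\begin{equation*}
\frac{a_n}{s_n}=\frac{a_{n-1}+\alpha_n Y_n}{s_n}
=(1-r_{n-1})\frac{a_{n-1}}{s_{n-1}}+r_{n-1}Y_n,
\end{equation*}
which is exactly the recursion \eqref{rw-dyn} (with the index shifted by one) for the sequence $\mathcal{W}_n:=a_n/s_n$. Matching the initial value by taking $a_0=\mathcal{W}_0\,s_0$, the proportion process coincides with $(\mathcal{W}_n)_n$, which completes the identification.

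There is no serious obstacle here: the statement is essentially a computation. The one point requiring care is the bookkeeping with time indices (the dynamics \eqref{rw-dyn} relates $\mathcal{W}_{n+1}$ to $\mathcal{W}_n$ and $Y_{n+1}$, whereas the urn adds $\alpha_n$ at step $n$), and the only genuine idea is to recognize that imposing the affine update on the proportion forces the deterministic total-count recursion $s_n=s_{n-1}/(1-r_{n-1})$, which is precisely what pins down the weights $\alpha_n$ in \eqref{eq:alpha_urn_model}. Equivalently, one could reverse the logic: demand that $a_n/s_n$ satisfy \eqref{rw-dyn}, cancel the common factor $\bigl(a_{n-1}/s_{n-1}-Y_n\bigr)$, and solve $r_{n-1}s_{n-1}=(1-r_{n-1})\alpha_n$ for $\alpha_n$, recovering \eqref{eq:alpha_urn_model} after substituting the closed form for $s_{n-1}$.
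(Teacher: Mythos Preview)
Your proposal is correct and follows essentially the same approach as the paper: both construct the urn with total count $s_n=s_0/\prod_{k=0}^{n-1}(1-r_k)$, identify $\alpha_n=r_{n-1}s_n$ and $s_{n-1}=(1-r_{n-1})s_n$, and verify that the proportion $a_n/s_n$ satisfies the recursion \eqref{rw-dyn}. The only cosmetic difference is the direction of the check---the paper multiplies the dynamics by $s_{n+1}$ to recover the urn update $H_{n+1}=H_n+\alpha_{n+1}Y_{n+1}$ (with $H_n=s_n\mathcal{W}_n$), whereas you divide the urn update by $s_n$ to recover the dynamics.
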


\begin{proof} Firstly, we recall the dynamics of a general two-color urn model: if
$S_0$ is the initial number of balls in the urn, 
$Z_n$ is the proportion of balls of color $A$ inside the urn at time-step $n$, 
$U_n^A$ (resp. $U_n^B$) is the number of balls of color $A$ (resp. $B$) 
added to the urn at time-step $n\geq 1$, we have that $(Z_n)_n$ follows the dynamics
\begin{equation}\label{urn-prop-dyn}
Z_{n+1}=(1-R_{n+1})Z_n+R_{n+1}Y_{n+1}
\end{equation}
with $R_{n+1}=U_{n+1}/S_{n+1}$ and $Y_{n+1}=U_{n+1}^A/U_{n+1}$, where
$U_{n+1}=U_{n+1}^A+U_{n+1}^B$ (i.e.~the number of balls added to the
urn at time-step $n+1$) and 
$S_{n+1}=S_0+\sum_{k=1}^{n+1} U_{k}$ (i.e.~ the total number of balls in the urn at time-step $n+1$).\\ 
\indent Now, let $s_0 > 0$ and, for any
$n\geq 0$, set
\[
\alpha_{n+1} = s_{n} \frac{r_n}{1-r_n}, \qquad
s_{n+1} =  \alpha_{n+1} + s_{n} = \frac{s_n}{1-r_n}
\]
so that
\[
s_{n+1} =
s_0 + \sum_{k=0}^n \alpha_{k+1} =
\frac{s_0}{\prod_{k=0}^{n} (1-r_k)}\,,
\qquad \alpha_{n+1} = s_{n+1}r_n =  
s_0\frac{r_n}{\prod_{k=0}^{n} (1-r_k)}
\]
and, by \eqref{rw-dyn}, 
\[
s_{n+1} \mathcal{W}_{n+1} = s_{n} \mathcal{W}_{n} + \alpha_{n+1} Y_{n+1}. 
\]
Set $H_{n}=s_{n}\mathcal{W}_{n}$
for each $n\geq 0$.  By induction, we get
\[
H_{n+1} = H_{n} + \alpha_{n+1} Y_{n+1}= H_{0} + \sum_{k=0}^n \alpha_{k+1} Y_{k+1}.
\]
If $K_{n} = s_{n} (1 - {\mathcal W}_{n})$ for each $n\geq 0$, then we have
\[
K_{n+1} = K_{n} + \alpha_{n+1} (1 - Y_{n+1}) =
K_{0} + \sum_{k=0}^n \alpha_{k+1} (1 - Y_{k+1}).
\]
(Note that $H_n$ and $K_n$ can be interpreted as the numbers of balls in the urn of 
color $A$ and color $B$, respectively, at time-step $n$).
Moreover $s_{n+1} = H_{n+1} + K_{n+1}$.  \\ \indent Summing up, we
have shown that $\mathcal{W}_n$ corresponds to the proportion
$Z_n=H_n/s_n$ of balls of color $A$ inside the urn at time-step $n$
for a two-color urn process where $S_0=s_0$ is the initial number of balls in the urn, 
the number of balls of color $A$ (resp. $B$) added to the urn at time-step $n$ is $U_n^A=\alpha_{n}Y_n$
(resp. $U_n^B=\alpha_n(1-Y_n)$).  Indeed, \eqref{rw-dyn} and
\eqref{urn-prop-dyn} coincide since $R_{n+1}=
\frac{U_{n+1}^A+U_{n+1}^B}{S_{n+1}}=\frac{\alpha_{n+1}}{s_{n+1}}=r_n$.
\end{proof}

\begin{remark} \rm Note that in the above proposition, 
we only give the number of added balls $U_n^A$ and $U_n^B$ at
  each time-step $n$ in terms of $\alpha_n$ and $Y_n$. We give no
  specifications about the conditional distribution of $Y_{n+1}$ given
  $(\mathcal{W}_0, Y_1,\dots, Y_n)$, that is the
  updating mechanism of the urn. Even if we require that
  $(\mathcal{W}_n)_n$  
  is a martingale with respect to some filtration $\mathcal{G}=({\mathcal G}_n)_n$  
  (as below), that is
  $E[Y_{n+1}\vert \mathcal{G}_n]=\mathcal{W}_n$ a.s., this is not enough to
  determine the conditional distribution of $Y_n$ given
  $\mathcal{G}_n$, except for the trivial case when the random
  variables $Y_n$ are indicator functions.
\end{remark}

Now, let $M=(M_n)_{n\geq 0}$ be a martingale with respect to 
 some filtration $\mathcal G=({\mathcal G}_n)_n$, 
taking values in $[0,1]$ and following the dynamics of \eqref{rw-dyn}, that is
\begin{equation}\label{mart-dyn}
M_{n+1}=(1-r_n)M_n+r_nY_{n+1},\quad n\geq 0,
\end{equation}
where $0\leq r_n<1$, $Y_{n+1}$ takes values in $[0,1]$ and $E[Y_{n+1}\vert {\mathcal
    G}_n]=M_n$. Set $M_\infty\stackrel{a.s.}=\lim_n M_n$.
\\

\indent In the following theorem we will present a sufficient condition 
ensuring that the probability that the process 
$(M_n)_n$ converges to the barrier-set $\{0,1\}$ is zero. 
The merit of this result is that it is 
very general, as it holds for any martingale whose dynamics can be written as in \eqref{mart-dyn}.
\\
\indent Before presenting the theorem, notice that when $P(M_0=0)>0$, 
we trivially have a strictly positive probability of
touching the barrier-set $\{0\}$ in the limit, i.e.~$P(M_\infty=0)>0$, since we obviously have
$P(M_\infty=0\vert M_0=0)=1$.   
On the contrary, when $P(M_0=1)>0$, we trivially have
$P(M_\infty=1)>0$ as $P(M_\infty=1\vert M_0=1)=1$.
For this reason, in the next result the probability of touching the barriers in the limit will be presented given the set  
$\{0<M_0<1\}$.

\begin{theorem}\label{mart-th}
 If $P(0<M_0<1)>0$ and  
\begin{equation}\label{eq-cond-zero2}
r_n = O\Big( e^{-\sum_{k=0}^n r_k} \sum_{k=0}^n r_k \Big),
\end{equation}
then $P(M_\infty=0\vert\,0<M_0<1)=P(M_\infty=1\vert\,0<M_0<1)=0$.
\end{theorem}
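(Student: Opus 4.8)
The plan is to treat the two barriers symmetrically and to reduce each to a statement about the slow growth of a negative moment. Since $(M_n)$ is a bounded martingale it converges a.s.\ to $M_\infty$, and because $1-M_n$ is a martingale of exactly the same type (replace $Y_{n+1}$ by $1-Y_{n+1}\in[0,1]$ and $M_n$ by $1-M_n$), it suffices to prove $M_\infty>0$ a.s.\ on $\{0<M_0<1\}$; the bound $M_\infty<1$ then follows by applying the argument to $1-M_n$. As $\{M_\infty=0\}=\{-\log M_\infty=+\infty\}$ and $-\log M_n\ge0$, I would aim at $E[-\log M_\infty\,;\,M_0\ge\epsilon]<\infty$ for each fixed $\epsilon\in(0,1)$, and then let $\epsilon\downarrow0$ using $\{M_0>0\}=\bigcup_j\{M_0\ge1/j\}$. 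Restricting to $\{M_0\ge\epsilon\}$ is convenient because there $M_n\ge\epsilon\prod_{k<n}(1-r_k)>0$, so $\log M_n$ is integrable at every finite $n$; by Fatou it is then enough to bound $\sup_n E[-\log M_n\,;\,M_0\ge\epsilon]$.

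To obtain such a bound I would control the compensator of the supermartingale $\log M_n$. Writing $\delta_{n+1}:=r_n(Y_{n+1}-M_n)/M_n$, one has $\log M_{n+1}-\log M_n=\log(1+\delta_{n+1})$ with $\delta_{n+1}\ge-r_n$ and $E[\delta_{n+1}\mid\mathcal G_n]=0$. The elementary inequality $\log(1+x)\ge x-\frac{x^2}{2(1-r_n)}$ for $x\ge-r_n$, combined with $E[\delta_{n+1}^2\mid\mathcal G_n]=r_n^2\,\mathrm{Var}(Y_{n+1}\mid\mathcal G_n)/M_n^2\le r_n^2(1-M_n)/M_n$ (from $\mathrm{Var}(Y_{n+1}\mid\mathcal G_n)\le M_n(1-M_n)$), gives
\[
E[-\log M_{n+1}+\log M_n\mid\mathcal G_n]\ \le\ \frac{r_n^2}{2(1-r_n)}\,\frac{1-M_n}{M_n}.
\]
Summing expectations yields $E[-\log M_n]\le E[-\log M_0]+\tfrac12\sum_{k<n}\frac{r_k^2}{1-r_k}\,E\!\left[\frac{1-M_k}{M_k}\right]$, so the whole theorem reduces to proving $\sum_n r_n^2\,E\!\left[\frac{1-M_n}{M_n}\right]<\infty$ (and its mirror image for $1-M_n$).

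The decisive input is the estimate $E[M_n^{-1}]=O(\sigma_n)$, where $\sigma_n:=\sum_{k=0}^n r_k$; since $E[(1-M_n)/M_n]=E[M_n^{-1}]-1$, this turns the series into $O\big(\sum_n r_n^2\sigma_n\big)$. Here the hypothesis enters sharply: from $r_n=O(e^{-\sigma_n}\sigma_n)$ one gets $r_n^2\sigma_n=O(r_n\,\sigma_n^2 e^{-\sigma_n})$, and because $\sum_n r_n=+\infty$ forces $\sigma_n\uparrow\infty$ and $r_n\to0$, the sum $\sum_n r_n\,\sigma_n^2 e^{-\sigma_n}$ compares with $\int_0^\infty\sigma^2 e^{-\sigma}\,d\sigma=2<\infty$ (a Riemann-sum comparison for the eventually decreasing integrand $\sigma\mapsto\sigma^2e^{-\sigma}$ sampled along the partition $\{\sigma_n\}$ of mesh $r_n\to0$). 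Thus the condition is calibrated precisely so that $\sum_n r_n^2\sigma_n<\infty$, and the reduction closes.

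The main obstacle is exactly the key estimate $E[M_n^{-1}]=O(\sigma_n)$. The naive route—the pathwise ``straight-to-the-barrier'' bound $M_n\ge M_0\prod_{k<n}(1-r_k)$—only gives $E[M_n^{-1}]\lesssim\prod_{k<n}(1-r_k)^{-1}\asymp e^{\sigma_n}$, which is hopelessly lossy and makes $\sum_n r_n^2 E[M_n^{-1}]$ diverge (already for $r_n\sim1/n$ it yields a harmonic series). The genuine, far slower growth must be extracted from the one-step recursion itself: by convexity of $x\mapsto x^{-1}$ and $E[Y_{n+1}\mid\mathcal G_n]=M_n$ with $Y_{n+1}\in[0,1]$, the conditional expectation $E[M_{n+1}^{-1}\mid\mathcal G_n]$ is maximised by the two-point law of $Y_{n+1}$ on $\{0,1\}$, giving
\[
E[M_{n+1}^{-1}\mid\mathcal G_n]\ \le\ \frac{M_n^{-1}-1}{1-r_n}+\frac{M_n}{(1-r_n)M_n+r_n},\qquad \frac{M_n}{(1-r_n)M_n+r_n}=1-r_n\,\frac{1-M_n}{(1-r_n)M_n+r_n}.
\]
Dropping the last, negative term reproduces the useless $e^{\sigma_n}$ bound; retaining it is essential, for this ``pull-back'' term records that whenever $M_n$ approaches $0$ the martingale is pushed back with overwhelming force. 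Quantifying this correction—equivalently, controlling the mass $M_n$ places near $0$—is the delicate point, and it is here that the hypothesis must be fed back in to close an induction delivering the linear-in-$\sigma_n$ bound (for each fixed $\epsilon$, on the event $\{M_0\ge\epsilon\}$). I expect this to be the technically hardest step; the logarithmic reduction, the variance bound, and the final integral comparison are routine by comparison.
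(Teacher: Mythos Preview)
Your reduction is clean, but the proposal does not prove the theorem: the entire weight of the argument rests on the moment estimate $E[M_n^{-1};M_0\ge\epsilon]=O(\sigma_n)$, which you explicitly leave open. Everything preceding it---the log-moment reduction via $\log(1+x)\ge x-\tfrac{x^2}{2(1-r_n)^2}$ (note the square; Taylor's remainder gives $(1+\xi)^{-2}$, not $(1+\xi)^{-1}$), the variance bound $\mathrm{Var}(Y_{n+1}\mid\mathcal G_n)\le M_n(1-M_n)$, and the Riemann-sum comparison $\sum_n r_n\sigma_n^2 e^{-\sigma_n}<\infty$---is routine and correct, but it only repackages the original question into that moment bound. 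Your own analysis of the one-step recursion shows why this is hard: dropping the ``pull-back'' term yields $E[M_n^{-1}]=O(e^{\sigma_n})$, and you give no mechanism by which retaining it upgrades this to linear growth in $\sigma_n$. Writing out the increment exactly (in the extremal Bernoulli case) gives $E[M_{n+1}^{-1}\mid\mathcal G_n]-M_n^{-1}=\tfrac{r_n^2(1-M_n)}{(1-r_n)M_n[(1-r_n)M_n+r_n]}$, and bounding the bracket by $r_n$ simply reproduces the exponential estimate, while bounding it by $(1-r_n)M_n$ introduces $E[M_n^{-2}]$; there is no evident way to close an induction at first moment alone.

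The paper avoids moment control of $M_n^{-1}$ altogether and works pathwise. Passing to the urn variable $H_n=s_nM_n$ with $s_n=s_0/\prod_{k<n}(1-r_k)$, it first shows $H_n\to\infty$ a.s., then---this is the core step, replacing your missing lemma---uses Freedman's exponential martingale inequality to prove $\limsup_n H_n/\ln s_n=+\infty$ a.s. Finally, a direct $L^2$ computation gives the \emph{pathwise} bound $P(M_\infty=0\mid\mathcal G_n)\le C\ln(s_n)/H_n$; since the left side converges a.s.\ to $\mathbbm 1_{\{M_\infty=0\}}$ while the right side has $\liminf$ zero along a subsequence, the conclusion follows. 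The crucial difference is that the paper needs only a subsequential pathwise statement, never integrability of $1/M_n$; if you wish to salvage your route, you will need an input of comparable strength---some concentration or tail bound for $M_n$ near $0$---and a bare one-step recursion is unlikely to provide it.
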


In order to prove the stated result, we generalize
the technique used in \cite[Lemma 1]{LamPagTar04}. Firstly, we present 
some auxiliary results that will be proven in Appendix~\ref{app:proofs}.

\begin{lemma}\label{lem-equivalent}
Let $\alpha_n$ be defined as in \eqref{eq:alpha_urn_model} and $s_n = s_0+\sum_{k=1}^n\alpha_k$. We have:
\begin{itemize}
\item[a)] If $\sum_n r_n =+\infty$, we have
\begin{equation}\label{eq-div-serie}
\sum_{n=0}^\infty \frac{r_n}{\sum_{k=0}^n r_k} =+\infty.
\end{equation}
\item[b)] If $0<\sup_n r_n<1$, then we 
have
\footnote{$a_n\asymp b_n$ means that $0<\liminf_n \tfrac{a_n}{b_n}\leq\limsup_n \tfrac{a_n}{b_n}<+\infty$.}.
\begin{equation}\label{eq-equivalence-sumr-logs}
\sum_{k=0}^n r_k =
\sum_{k=0}^n \frac{\alpha_{k+1}}{s_{k+1}} \asymp \ln(s_{n+1})
= 
\ln\Big(\frac{s_0}{\prod_{k=0}^n(1-r_k)}\Big)
\end{equation}
\item[c)] Condition \eqref{eq-cond-zero2} 
implies $\sum_n r_n^2 <+\infty$ and  
is equivalent to 
\begin{equation}\label{eq-cond-zero}
\frac{r_n}{\sum_{k=0}^n r_k} = O\Big(\prod_{k=0}^{n}(1-r_k)\Big)\,,
\end{equation}
which is equivalent to 
\begin{equation}\label{eq-cond-zero3}
\alpha_{n+1} = O(\ln(s_{n+1}))
\end{equation}
and also to 
\begin{equation}\label{eq-cond-zero4}
r_n = O( \ln (s_{n+1}) / s_{n+1} ).
\end{equation} 
\end{itemize}
\end{lemma}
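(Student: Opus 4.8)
The plan is to handle the three parts in order, leaning on the identities recorded in the proof of Proposition~\ref{th-corresp}: $\prod_{k=0}^{n}(1-r_k)=s_0/s_{n+1}$, $\alpha_{n+1}=r_n s_{n+1}$, and therefore $\alpha_{k+1}/s_{k+1}=r_k$. The last identity already yields the first equality in \eqref{eq-equivalence-sumr-logs} for free, and $s_{n+1}=s_0/\prod_{k=0}^n(1-r_k)$ gives the last one. Throughout I write $S_n=\sum_{k=0}^n r_k$. For part (a) I would use an Abel--Dini blocking estimate: for $m<n$, since $S_k\le S_n$ whenever $k\le n$,
\[
\sum_{k=m+1}^{n}\frac{r_k}{S_k}\ \ge\ \frac{1}{S_n}\sum_{k=m+1}^{n}r_k\ =\ \frac{S_n-S_m}{S_n}\ =\ 1-\frac{S_m}{S_n}.
\]
Because $\sum_n r_n=+\infty$ forces $S_n\to+\infty$, for each fixed $m$ the right-hand side tends to $1$ as $n\to\infty$; hence the tails of $\sum_k r_k/S_k$ cannot vanish, so the series diverges, which is \eqref{eq-div-serie}.

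For part (b) the two equalities are the identities recalled above, so only $S_n\asymp\ln(s_{n+1})$ carries content. Writing $\ln(s_{n+1})=\ln s_0+\sum_{k=0}^n\bigl(-\ln(1-r_k)\bigr)$, I would insert the elementary bounds $r\le -\ln(1-r)\le r/(1-c)$, valid for $0\le r\le c:=\sup_k r_k<1$, each following from the monotonicity of $r\mapsto -\ln(1-r)-r$ and of $r\mapsto r/(1-r)+\ln(1-r)$. Summing sandwiches $\sum_{k=0}^n\bigl(-\ln(1-r_k)\bigr)$ between $S_n$ and $(1-c)^{-1}S_n$; when $S_n\to\infty$ the additive constant $\ln s_0$ is negligible and $\asymp$ follows, while if $\sum_k r_k$ converges all the quantities tend to positive finite limits, so $\asymp$ holds there as well.

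Part (c) is the heart of the matter, and I would first show that \eqref{eq-cond-zero2} implies $\sum_n r_n^2<+\infty$. If $\sum_n r_n<+\infty$ this is immediate since $r_n<1$. Otherwise $S_n\to+\infty$ and \eqref{eq-cond-zero2} reads $r_n\le C\,S_n e^{-S_n}$ for large $n$, whence, with $g(x)=x e^{-x}$ (which is decreasing for $x\ge1$),
\[
r_n^{2}\ \le\ C\,(S_n-S_{n-1})\,g(S_n)\ \le\ C\int_{S_{n-1}}^{S_n} g(t)\,dt ,
\]
and summing over the large $n$ (for which $S_{n-1}\ge1$) bounds $\sum_n r_n^2$ by $C\int_0^\infty x e^{-x}\,dx=C<+\infty$. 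This Riemann-sum comparison is the step I expect to be the main obstacle, both in isolating the right auxiliary function $g$ and in justifying the passage from the series to the convergent integral. The same bound gives $r_n\to0$, so that a tail version of part (b) still yields $S_n\asymp\ln(s_{n+1})$ even though $\sup_n r_n$ need not be $<1$ (only finitely many terms are bounded away from $0$, and they are absorbed into the $\asymp$-constants).

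Finally I would close the loop of equivalences. The three conditions \eqref{eq-cond-zero}, \eqref{eq-cond-zero3}, \eqref{eq-cond-zero4} are mutually equivalent without further hypotheses: dividing \eqref{eq-cond-zero3} by $s_{n+1}$ and using $\alpha_{n+1}=r_n s_{n+1}$ produces \eqref{eq-cond-zero4}, while $\prod_{k=0}^n(1-r_k)=s_0/s_{n+1}$ together with $S_n\asymp\ln(s_{n+1})$ from part (b) turns \eqref{eq-cond-zero} into \eqref{eq-cond-zero4}. It then remains to bridge these with \eqref{eq-cond-zero2}. One direction is immediate from $\prod_{k=0}^n(1-r_k)\le e^{-S_n}$, which makes the bound \eqref{eq-cond-zero} stronger than \eqref{eq-cond-zero2}. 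For the converse I would use that $\sum_n r_n^2<+\infty$ (just established) makes $\sum_k\bigl(-\ln(1-r_k)-r_k\bigr)$ summable, since $0\le -\ln(1-r)-r\le r^2/\bigl(2(1-c)\bigr)$ on the tail; hence $\prod_{k=0}^n(1-r_k)\asymp e^{-S_n}$, and \eqref{eq-cond-zero2} upgrades to \eqref{eq-cond-zero}. Combining all the implications shows the four conditions are equivalent, completing the lemma.
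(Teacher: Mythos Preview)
Your argument is correct. For part~(c) it coincides with the paper's: the same integral comparison against $x\mapsto xe^{-x}$ extracts $\sum_n r_n^2<+\infty$ from \eqref{eq-cond-zero2}, and then the summability of $-\ln(1-r_k)-r_k$ gives $\prod_k(1-r_k)\asymp e^{-S_n}$, upgrading the exponential bound to the product bound. Parts~(a) and~(b) take genuinely different routes. In~(a) the paper passes from $\sum r_n/S_n$ to $\sum r_n/S_{n-1}$ and compares the latter to $\int dx/x$ via Riemann sums; your Abel--Dini block estimate $\sum_{m+1}^n r_k/S_k\ge 1-S_m/S_n$ is more direct and avoids that detour. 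In~(b) the paper again sandwiches with Riemann sums for $\sum\alpha_{k+1}/s_{k+1}$ and $\sum\alpha_{k+1}/s_k$ against $\int dx/x$, whereas you expand $\ln s_{n+1}=\ln s_0-\sum_k\ln(1-r_k)$ and use the elementary bounds $r\le-\ln(1-r)\le r/(1-c)$; this is cleaner and yields the same $\asymp$. One minor quibble: the phrase ``mutually equivalent without further hypotheses'' in your treatment of \eqref{eq-cond-zero}--\eqref{eq-cond-zero4} is not quite right, since you immediately invoke $S_n\asymp\ln s_{n+1}$ from part~(b). The repair is the one you already sketched a few lines earlier---each of the four conditions forces $r_n\to0$, so a tail version of~(b) applies---but the wording should reflect that dependency.
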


\indent We can interpret $\prod_{k=0}^n(1-r_k)$ as a measure of
the memory of the process at time-step $n$.  Therefore, the above
condition \eqref{eq-cond-zero}, equivalent to \eqref{eq-cond-zero2}, can be read as a bound for 
the amount of new information in order to avoid that it becomes too large with respect to the
historical information observed in the past until time-step
$n$. Then, since the contribution of the past in defining the current
status $M_n$ remains relevant, the process
$M_n$ cannot move too far from the initial values, and
this avoids it to touch the barriers $0$ or $1$ in its limit.

\begin{proof}[Proof of Theorem~\ref{mart-th}] 
Without loss of generality, we can assume $P(0<M_0<1)=1$ (otherwise, it is enough to replace $P$ by $P(\cdot|0<M_0<1)$). 
In this proof we will focus only on the case $\sum_n r_n=+\infty$ as 
when $\sum_n r_n<+\infty$ condition \eqref{eq-cond-zero2} is trivially satisfied and 
we have 
$$
0<M_0\prod_{k=0}^\infty(1-r_k) \leq 
M_\infty \leq
1-(1-M_0)\prod_{k=0}^\infty(1-r_k)<1
\quad \mbox{a.s.}\,.
$$
In the sequel we use the notation of the above Proposition \ref{th-corresp} and 
we split the rest of the proof in some steps.
\\

\noindent{\em First step (proof of $H_n=s_n M_n\stackrel{a.s.}\to +\infty$ 
and $K_n=s_n(1-M_n)\stackrel{a.s.}\to +\infty$):} 
\\
As seen in the proof of Proposition \ref{th-corresp}, we have 
$$
H_0=s_0M_0\quad\mbox{and}\quad H_{n+1}=s_0M_0 + \sum_{k=0}^n \alpha_{k+1} Y_{k+1}.
$$
Therefore, we have  $(H_n)_n$ increasing and 
\begin{equation*}
\begin{split}
\lim_n H_n=\limsup_n H_n&\geq \limsup_n \sum_{k=0}^n \alpha_{k+1} Y_{k+1}=
\limsup_n \sum_{k=0}^n \Big(\sum_{\ell=0}^k r_\ell \Big) \frac{\alpha_{k+1}}{\sum_{\ell=0}^k r_\ell} Y_{k+1}\\
&\geq  r_0 \sum_{k=0}^{+\infty} \frac{\alpha_{k+1}}{\sum_{\ell=0}^k r_\ell} Y_{k+1}.
\end{split}
\end{equation*}
It follows, that $H_n\to +\infty$ a.s.\ if
$\sum_{k} \frac{\alpha_{k+1}}{\sum_{\ell=0}^k r_\ell} Y_{k+1}=+\infty$ a.s. 
For proving this last fact, we recall that, by \eqref{eq-cond-zero}, 
we have 
\[
\frac{\alpha_{k+1}}{\sum_{\ell=0}^k r_\ell}= 
s_0\frac{r_k}{\prod_{\ell=0}^{k} (1-r_\ell)}
\frac{1}{\sum_{\ell=0}^k r_\ell}= O(1). 
\]
Therefore, 
by Theorem~\ref{del-mey-result-app} reported in Appendix \ref{app:proofs},  
$\sum_{k} \frac{\alpha_{k+1}}{\sum_{\ell=0}^k r_\ell} Y_{k+1}=+\infty$ a.s.\
if and only if $\sum_{k} \frac{\alpha_{k+1}}{\sum_{\ell=0}^k r_\ell} M_{k}=+\infty$ a.s. 
But, we have 
$$
\alpha_{k+1} M_k\geq \alpha_{k+1} M_0 \prod_{\ell=0}^k (1-r_\ell)=s_0 M_0 r_k,
$$ 
so that by \eqref{eq-div-serie}
$$
\sum_k \frac{\alpha_{k+1}}{\sum_{\ell=0}^k r_\ell} M_{k}\geq s_0 M_0 
\sum_{k} \frac{r_k}{\sum_{\ell=0}^k r_\ell}= +\infty.
$$
By a symmetric argument, we get $K_n\to +\infty$ a.s.
\\

\noindent{\em Second step (proof of $\liminf_n\ln(s_n)/H_n\stackrel{a.s.}=0$ and $\liminf_n \ln(s_n)/K_n\stackrel{a.s.}=0$):}
We will prove that $\limsup_n \tfrac{H_n}{\ln(s_n)} \stackrel{a.s.}= +\infty$. This is 
equivalent to showing that, for any $a>0$, the event
\[
A_{a} = \Big\{ 
\sup_{n} 
\frac{\sum_{k=0}^{n-1} \alpha_{k+1} Y_{k+1} }{\ln(s_n)} \leq a
\Big\}
\]
has probability zero. 
To this end, fix $b > a$ and let us define, for any $n\in\mathbb{N}$, 
the sets
\[
A_{a,n} = \Big\{ 
\frac{\sum_{k=0}^{n-1} \alpha_{k+1} Y_{k+1} }{\ln(s_n)} \leq a
\Big\}
\quad\text{and}\quad
B_{b,n} = \Big\{ 
\frac{\sum_{k=0}^{n-1} \alpha_{k+1} M_k }{\ln(s_n)} \geq b \Big\}.
\]
To prove $P(A_{a})=0$ we will show the following points:
\begin{itemize}
\item[(i)] $P(A_{a}) = \lim_n P (A_{a} \cap B_{b,n})$;
\item[(ii)] $\lim_n P (A_{a} \cap B_{b,n}) \leq  \limsup_n P(A_{a,n} \cap B_{b,n})$;
\item[(iii)] for any $\epsilon>0$ there exists a sufficiently large $b$ such that $\limsup_n P(A_{a,n} \cap B_{b,n})\leq \epsilon$.
\end{itemize}
Regarding point (i), we know 
from the first step of this proof
that $H_k\rightarrow+\infty$ a.s.,
and so, for any $\lambda>0$, the probability that $ M_k = \tfrac{H_k}{s_k}  \geq \tfrac{\lambda}{s_k}\geq 
\tfrac{\lambda}{s_{k+1}}$ eventually is one.
Therefore, by \eqref{eq-equivalence-sumr-logs}, we have (for a suitable constant $c>0$) that
\[
\liminf_n \frac{\sum_{k=0}^{n-1} \alpha_{k+1} M_k }{\ln(s_n)}\geq
\lambda\liminf_n \frac{\sum_{k=0}^{n-1}\tfrac{\alpha_{k+1}}{s_{k+1}} }{\ln(s_n)}
 \geq c\lambda \qquad a.s.
\]
which implies, by the arbitrariness of $\lambda$,
\[
\liminf_n \frac{\sum_{k=0}^{n-1} \alpha_{k+1} M_k }{\ln(s_n)}  = +\infty \qquad a.s.
\]
Then
$P (\liminf_n B_{b,n}) = 1$, hence $\lim_n P(B_{b,n}) = 1$ and (i) is verified.

For point (ii), it is enough to notice that $A_{a} = \cap_n A_{a,n}$.

Finally, regarding point (iii), set $C=\sup_n\tfrac{\alpha_n}{\ln(s_n)}<+\infty$ by \eqref{eq-cond-zero} 
and  let $n$ be fixed and define for any $k = 0,\ldots,n-1$
\[
X^*_{k+1} = \frac{\alpha_{k+1} }{C\ln(s_n)} Y_{k+1}, \qquad 
M^*_k = E ( X^*_{k+1} |\mathcal{G}_k) = \frac{\alpha_{k+1} }{C\ln(s_n)} M_k, 
\]
while $X^*_{k+1} = M^*_k = 0$ for $k \geq n$.
With this notations, $0 \leq X^*_{k} \leq 1$, $\tau^* \equiv n-1$ is a stopping time, and
$A_{a,n} = \{ \sum_{k=0}^{\tau^*} X^*_{k+1} \leq a/C\}$, 
$B_{b,n} = \{ \sum_{k=0}^{\tau^*} M^*_{k+1} \geq b/C\}$.
Then, by \cite[Theorem~1]{freed73}, we have the following upper bound
\[
P\Big( A_{a,n} \cap B_{b,n}\Big) = P \Big( \sum\nolimits_{k=0}^{\tau^*} X^*_{k+1} \leq \frac{a}{C},
\sum\nolimits_{k=0}^{\tau^*} M^*_{k+1} \geq \frac{b}{C}\Big)  \leq 
\Big(\frac{b}{a}\Big)^{a/C} e^{(a-b)/C},
\]
which holds uniformly in $n$.
This naturally implies that there exists $b$ large enough such that $P( A_{a,n} \cap B_{b,n})<\epsilon$ for any $n\in\mathbb{N}$.
This concludes the proof.\\
By a symmetric argument we can obtain the same limit relation also for $K_n$.
\\

\noindent{\em Third step (conclusion):}\\
We observe that 
\begin{equation*}
\begin{split}
E[(M_\infty-M_n)^2|\mathcal{G}_n]&=\sum_{k\geq n}r_{k}^2 E[(Y_{k+1}-M_k)^2|\mathcal{G}_n] 
=\sum_{k\geq n} r_k^2 E[E[(Y_{k+1}-M_k)^2|\mathcal{G}_k]|\mathcal{G}_n]\\
&\leq\sum_{k\geq n} r_k^2 E[M_k-M_k^2|\mathcal{G}_n]\leq M_n \sum_{k\geq n} r_k^2.
\end{split}
\end{equation*}
Moreover, we have 
$$
(M_\infty-M_n)^2=M_n^2 \mathbbm{1}_{\{M_\infty=0\}}+\mathbbm{1}_{\{M_\infty\neq 0\}} (M_\infty-M_n)^2
\geq M_n^2 \mathbbm{1}_{\{M_\infty=0\}}.
$$
Therefore, recalling that $C=\sup_n\tfrac{\alpha_n}{\ln(s_n)}<+\infty$
and that $x \mapsto \tfrac{\ln(x)}{x^2}$ is decreasing for $x\geq2$, we have, for any $n$ with $s_n\geq 2$,
\begin{equation*}
\begin{split}
M_n^2 P(M_\infty=0|\mathcal{G}_n)&\leq E[(M_\infty-M_n)^2|\mathcal{G}_n]\leq M_n \sum_{k\geq n} r_k^2
=M_n \sum_{k\geq n}\frac{\alpha_{k+1}^2}{s_{k+1}^2}\\
&\leq 
M_n \sum_{k\geq n}C \ln(s_{k+1})\frac{\alpha_{k+1}}{s_{k+1}^2}
=C M_n\sum_{k\geq n}\int_{s_k}^{s_{k+1}}\frac{\ln(s_{k+1})}{s_{k+1}^2}\,dx\\
&\leq C M_n \int_{s_n}^{+\infty} \frac{\ln(x)}{x^2}\,dx
= C M_n \Big( \frac{\ln(s_n)+1}{s_n} \Big),
\end{split}
\end{equation*}
which, taking into account that $s_n\to +\infty$, means 
$M_n^2 P(M_\infty=0|\mathcal{G}_n)=O(M_n\ln(s_n)/s_n)$.
Since $M_n=H_n/s_n$, it follows $P(M_\infty=0|\mathcal{G}_n)=O(\ln(s_n)/H_n)$ and so 
(denoting by $c$ a suitable constant) we get 
$$
\mathbbm{1}_{\{M_\infty=0\}}\stackrel{a.s.}=\lim_n E[\mathbbm{1}_{\{M_\infty=0\}}|\mathcal{G}_n]=
\lim_n P(M_\infty=0|\mathcal{G}_n)\leq
c\liminf_n \frac{\ln(s_n)}{H_n} \stackrel{a.s.}=0,
$$
that is $P(M_\infty=0)=0$.\\
By a symmetric argument, we obtain $P(M_\infty=1)=0$.
\end{proof}

\indent We will now present the conditions to ensure that 
the probability of touching the barrier-set $\{0,1\}$ in the limit for the general class of martingales $(M_n)_n$ 
with dynamics \eqref{urn-prop-dyn} is strictly positive. 
In particular, we will focus on the barrier $\{0\}$, as the results for the barrier $\{1\}$ 
would be completely analogous. 
Moreover, the result will be presented conditioning on the set $\{0<M_0<1\}$ since, as already observed, 
it is trivial that $P(M_\infty=0\vert M_0=0)=1$
and $P(M_\infty=0\vert M_0=1)=0$.\\
\indent Before stating the result, let us first present the required assumptions and a technical remark.

\begin{assumption}\label{ass:2}
Assume $P(0<M_0<1)>0$ and that there exist:
\begin{enumerate}
\item\label{ass:2a} a sequence $(\delta_n)_n$ in $[0,1]$ such that
\(
\sum_{n=0}^{+\infty} \delta_n r_n = +\infty
\);
\item\label{ass:2b} a sequence of non-decreasing functions $g_n: [0,1] \to [0,1]$ 
such that, for any $C>0$,
\[
\sum_{n=1}^{+\infty} g_n\Big( \min\Big(C \prod_{k=0}^{n-1} (1 - \delta_k r_k) , 1 \Big) \Big) < +\infty;
\]
\item\label{ass:2c} a reinforcement mechanism based on 
$\{n_0, (\delta_n,g_n)_n\}$, with $n_0\in\mathbb{N}$:
setting
\[
\begin{aligned}
& A_{n+1} = \{Y_{n+1} \leq (1-\delta_n) M_n\},\\
& B_{n} = \{P(A_{n+1} | \mathcal{G}_n) \geq 1-g_n(M_n) \},\\
& C_{n_0}=
\cap_{n\geq n_0}\{A_n^c\cup B_n\} 
=
\cap_{n\geq n_0}\{\mathbbm{1}_{A_n}\leq \mathbbm{1}_{B_n}\}
,
\end{aligned}
\]
there exists and event $E_{n_0}\in \mathcal{G}_{n_0}$ such that
$E_{n_0}\subseteq C_{n_0}$ and $P(E_{n_0} \cap A_{n_0+m_0} |\, 0<M_0<1)>0$ 
for some $m_0\geq 1$. 
\end{enumerate}
\end{assumption}

\begin{remark}
Let $P_{E_{n_0}}(\cdot) = P(\cdot | E_{n_0})$ and 
$E_{E_{n_0}}[\cdot | \mathcal{G}] = E_{P_{E_{n_0}}}[\cdot | \mathcal{G}]$, that is the conditional expectation 
given the $\sigma$-field  $\mathcal{G}$ with respect to $P_{E_{n_0}}$. Then, for any non-negative random variable $X$ 
and $n \geq n_0$, we have 
\begin{equation}\label{eq:P_PE0}
E[X|\mathcal{G}_n]  \geq E[X|\mathcal{G}_n] \mathbbm{1}_{E_{n_0}} = E_{E_{n_0}}[X|\mathcal{G}_n]\mathbbm{1}_{E_{n_0}},
\qquad \text{a.s.\ (and $P_{E_{n_0}}$-a.s.)}
\end{equation}
Let $X = \mathbbm{1}_{A_{n+1}}$. As a consequence of Assumption~\ref{ass:2}.\ref{ass:2c}, we have for $n\geq n_0$ 
\begin{equation}\label{eq:PE0_reinf}
P_{E_{n_0}}(A_{n+1} |\mathcal{G}_n) \mathbbm{1}_{A_{n}} 
=P(A_{n+1} |\mathcal{G}_n) \mathbbm{1}_{A_{n}} 
\geq  (1 - g_n(M_n)) \mathbbm{1}_{A_{n}},
\quad \text{$P_{E_{n_0}}$-a.s.}
\end{equation}
\end{remark}

\begin{example}
For simplicity, assume that $P(0<M_0<1)=1$ (otherwise, replace $P$ by $P(\cdot|0<M_0<1)$). 
Now, consider the case when there exist $n_0\in\mathbb{N}$ and an event $E_{n_0}$, observable at time-step $n_0$ 
(this is the meaning of the $\mathcal{G}_{n_0}$-measurability) 
such that $P(E_{n_0})>0$ and, given the occurrence of $E_{n_0}$,  
$Y_{n+1}$ takes value in $\{0,1\}$ for each $n\geq n_0$. 
Then, there exists a reinforcement mechanism based on $\{n_0, (\delta_n\equiv 1,g_n\equiv\text{id})_n\}$. Indeed, 
$A_{n+1} = \{Y_{n+1} = 0\}$ and, by \eqref{eq:P_PE0}, we have 
$P(Y_{n+1}=0|\mathcal{G}_n) \mathbbm{1}_{E_{n_0}}=P_{E_{n_0}}(Y_{n+1}=0|\mathcal{G}_n) \mathbbm{1}_{E_{n_0}}=
(1-M_n)\mathbbm{1}_{E_{n_0}}=(1-g_n(M_n))\mathbbm{1}_{E_{n_0}}$ a.s. for each $n\geq n_0$. 
Therefore,   we have $E_{n_0}\subseteq B_n$ (a.s.) for any $n\geq n_0$ and 
so $E_{n_0}\subseteq C_{n_0}$ (a.s.). Assumption \ref{ass:2}.\ref{ass:2c} 
is equivalent to requiring  that  
$P_{E_{n_0}}(Y_{n_0+m_0}=0)=1-E[M_{n_0+m_0-1}|E_{n_0}]>0$ for some $m_0\geq 1$. 
But, this is true for each $m_0\geq 1$ since, by the martingale property, we get 
$E[M_{n_0+m_0-1}|E_{n_0}]=E[M_{n_0}|E_{n_0}]$ and this last quantity belongs to $(0,1)$ 
since $r_n<1$ and $Y_n\leq 1$ for each $n$ in the dynamics \eqref{mart-dyn} 
and so $M_{n_0}<1$ a.s. on $\{0<M_0<1\}$. 
Assumption \ref{ass:2}.\ref{ass:2a} reduces to $\sum_n r_n = +\infty$, that we already know to be necessary for
$P(M_{\infty} = 0|\, 0<M_0<1)>0$, while
Assumption \ref{ass:2}.\ref{ass:2b} reads as $\sum_{n=0}^\infty \prod_{k=0}^n (1 - r_k)  < +\infty$.
\end{example}

\begin{theorem}
  \label{th2}
  Let Assumption~\ref{ass:2} hold. Then 
  $$P(M_{\infty} = 0|\, 0<M_0<1)\geq P(\cap_{n=n_0+m_0}^{+\infty} A_n|0<M_0<1) > 0.$$
Specifically, on $\{0<M_0<1\}$, for each $m\geq 1$, we have 
\footnote{where $\prod_{k=n_0+m}^{n_0+m-1}=1$ by convention.}
\begin{multline}\label{eq:prob_fixation_0}
P(M_{\infty} = 0 | \mathcal{G}_{n_0+m})
\geq P( \cap_{n=n_0+m}^{+\infty} A_{n} \big| \mathcal{G}_{n_0+m} )\geq \\
\mathbbm{1}_{E_{n_0}\cap A_{n_0+m}}
\prod_{n=n_0+m}^{+\infty} \Big( 1 - g_{n} \Big( M_{n_0+m} \prod_{k=n_0+m}^{n-1} \big( 1 - \delta_{k} r_{k}\big)\Big)\Big)
\qquad a.s.\,.
\end{multline}
\end{theorem}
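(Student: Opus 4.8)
The plan is to prove the sharper pathwise bound \eqref{eq:prob_fixation_0} first and then to deduce the strict positivity by integrating it under $P(\cdot\mid 0<M_0<1)$. The statement \eqref{eq:prob_fixation_0} consists of two inequalities. The first one, $P(M_\infty=0\mid\mathcal G_{n_0+m})\geq P(\cap_{n\geq n_0+m}A_n\mid\mathcal G_{n_0+m})$, reduces to the set inclusion $\cap_{n=n_0+m}^{\infty}A_n\subseteq\{M_\infty=0\}$. To obtain this inclusion I would observe that on $A_{n+1}=\{Y_{n+1}\leq(1-\delta_n)M_n\}$ the dynamics \eqref{mart-dyn} forces $M_{n+1}\leq(1-r_n)M_n+r_n(1-\delta_n)M_n=(1-\delta_n r_n)M_n$; iterating from time-step $n_0+m$ gives, on $\cap_{n=n_0+m}^{\infty}A_n$, the bound $M_n\leq M_{n_0+m}\prod_{k=n_0+m}^{n-1}(1-\delta_k r_k)$, and since Assumption~\ref{ass:2}.\ref{ass:2a} yields $\sum_k\delta_k r_k=+\infty$ (with $\delta_k r_k\in[0,1)$) the product tends to $0$, so $M_\infty=0$ there.

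The core is the second inequality, which I would establish by a product recursion on the finite intersections $F_N:=\cap_{n=n_0+m}^{N}A_n$. Setting $Q_N:=\mathbbm{1}_{E_{n_0}}P(F_N\mid\mathcal G_{n_0+m})$, the base case $Q_{n_0+m}=\mathbbm{1}_{E_{n_0}\cap A_{n_0+m}}$ holds because $A_{n_0+m}$ is $\mathcal G_{n_0+m}$-measurable and $E_{n_0}\in\mathcal G_{n_0}\subseteq\mathcal G_{n_0+m}$. For the inductive step I would condition on $\mathcal G_{N-1}$, use $F_{N-1}\subseteq A_{N-1}$ together with the reinforcement inequality \eqref{eq:PE0_reinf} to get $P(A_N\mid\mathcal G_{N-1})\,\mathbbm{1}_{F_{N-1}}\geq(1-g_{N-1}(M_{N-1}))\,\mathbbm{1}_{F_{N-1}}$ on $E_{n_0}$, and then replace the random argument $M_{N-1}$ by the deterministic $\mathcal G_{n_0+m}$-measurable upper bound $\bar M_{N-1}:=M_{n_0+m}\prod_{k=n_0+m}^{N-2}(1-\delta_k r_k)$, which is legitimate on $F_{N-1}$ since $g_{N-1}$ is non-decreasing. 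As $1-g_{N-1}(\bar M_{N-1})$ is non-negative and $\mathcal G_{n_0+m}$-measurable, it factors out of the conditional expectation, giving $Q_N\geq(1-g_{N-1}(\bar M_{N-1}))\,Q_{N-1}$. Unrolling produces the finite-$N$ form of \eqref{eq:prob_fixation_0}, and letting $N\to\infty$ by continuity from above of conditional probabilities (as $F_N\downarrow\cap_{n\geq n_0+m}A_n$) yields the infinite product; the prefactor $\mathbbm{1}_{E_{n_0}}$ can be dropped since the right-hand side already carries $\mathbbm{1}_{E_{n_0}\cap A_{n_0+m}}$.

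For the strict positivity I would take $m=m_0$ and integrate \eqref{eq:prob_fixation_0} against $P(\cdot\mid 0<M_0<1)$, so that it remains to check that the integrand is strictly positive on a set of positive measure. On $E_{n_0}\cap A_{n_0+m_0}$, which has positive conditional probability by Assumption~\ref{ass:2}.\ref{ass:2c}, the bound $M_{n_0+m_0}\geq M_0\prod_{k=0}^{n_0+m_0-1}(1-r_k)>0$ shows $M_{n_0+m_0}>0$; writing $\bar M_j=\min\!\big(C\prod_{k=0}^{j-1}(1-\delta_k r_k),1\big)$ with the positive, $\mathcal G_{n_0+m_0}$-measurable constant $C=M_{n_0+m_0}/\prod_{k=0}^{n_0+m_0-1}(1-\delta_k r_k)$ lets me invoke Assumption~\ref{ass:2}.\ref{ass:2b} to deduce $\sum_j g_j(\bar M_j)<+\infty$. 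Hence the infinite product $\prod_j(1-g_j(\bar M_j))$ is strictly positive there, and the integral is positive.

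The step I expect to be the main obstacle is the inductive product bound: getting the measurability bookkeeping right (pushing $\mathbbm{1}_{E_{n_0}}$ and the $\mathcal G_{n_0+m}$-measurable factors through the nested conditional expectations) and, crucially, justifying the monotone replacement $g_{N-1}(M_{N-1})\leq g_{N-1}(\bar M_{N-1})$ only on the event $F_{N-1}$ where the pathwise bound $M_{N-1}\leq\bar M_{N-1}$ is available. One should also verify that $\bar M_j\leq 1$, so that the truncation in Assumption~\ref{ass:2}.\ref{ass:2b} is transparent, and that the positivity of the infinite product genuinely follows from summability, which requires each factor to be nonzero once $g_j(\bar M_j)\to 0$.
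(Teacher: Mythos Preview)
Your proposal is correct and follows essentially the same approach as the paper: the set inclusion $\cap_{n\geq n_0+m}A_n\subseteq\{M_\infty=0\}$ via the pathwise bound $M_{n+1}\leq(1-\delta_n r_n)M_n$ on $A_{n+1}$, the inductive product recursion using \eqref{eq:PE0_reinf} together with the monotonicity of $g_n$ to replace $M_{N-1}$ by the $\mathcal G_{n_0+m}$-measurable upper bound, and the integration at $m=m_0$ combined with Assumption~\ref{ass:2}.\ref{ass:2b} for strict positivity are exactly the paper's steps. The only cosmetic difference is that the paper phrases the induction under the conditional measure $P_{E_{n_0}}$ whereas you carry the factor $\mathbbm{1}_{E_{n_0}}$ explicitly, and in the last step the paper bounds $M_{n_0+m_0}\leq 1$ to obtain a deterministic constant $C$ while you keep the random (but $\mathcal G_{n_0+m_0}$-measurable, strictly positive) $C=M_{n_0+m_0}/\prod_{k=0}^{n_0+m_0-1}(1-\delta_k r_k)$; both are fine since Assumption~\ref{ass:2}.\ref{ass:2b} holds for every $C>0$.
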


\begin{remark}\label{rem-for-1}
If Assumption \ref{ass:2} holds for $(1-M_n)$ and $(1-Y_n)$, then $P(M_{\infty} = 1|\, 0<M_0<1) > 0$ and an inequality analogous to $\eqref{eq:prob_fixation_0}$ holds true.
\end{remark}

Before presenting the proof of Theorem~\ref{th2}, let us briefly discuss the basic ideas 
behind it and the conditions reported in Assumption~\ref{ass:2}.
\\
\indent The proof of $P(M_\infty=0|\, 0<M_0<1)>0$ will be realized by extending the definition of 
\emph{fixation}, which typically refers to 
the event that a real process definitively assume the same fixed value, e.g. in this case $\cap_{n\geq n_0}\{Y_{n}=0\}$.
In our general framework, where the distribution of $Y_{n+1}$ given $\mathcal{G}_n$ is not specified, and so not necessarily discrete, 
we introduce the notion of  ``$\delta_n$-\emph{fixation}'' as the fixation on the value $1$ 
of the sequence of $\{\mathbbm{1}_{A_n}:n\geq n_0+m_0\}$ and 
we prove that it occurs with strictly positive probability given $\{0<M_0<1\}$, i.e. $P(\cap_{n\geq n_0+m_0} A_n|0<M_0<1)>0$. 
Assumption \ref{ass:2}.\ref{ass:2a} ensures that
this $\delta_n$-\emph{fixation} implies $\{M_\infty=0\}$.
Indeed, since on each $A_{n+1}$ we have
$M_{n+1}\leq (1-\delta_n r_n)M_n$,
condition $\sum_n \delta_n r_n  = +\infty$
ensures that the decrease of the process $M_n$ on the fixation event
is strong enough to reach the barrier $\{0\}$. Assumption \ref{ass:2}.\ref{ass:2c}
ensures $A_n\cap E_{n_0}\subseteq B_n\cap E_{n_0}$ for any $n\geq n_0$, where $E_{n_0}$ is an event with 
strictly positive probability, given $\{0<M_0<1\}$, and observable at time-step $n_0$ (i.e. $E_{n_0}\in\mathcal{G}_{n_0}$). 
Since each $B_n$ provides a lower bound on the probability of the next set $A_{n+1}$, 
we can read Assumption \ref{ass:2}.\ref{ass:2c} as the existence of a 
\emph{triggering mechanism}, with a strictly positive probability of starting at time $m_0$ (i.e. $P(E_{n_0} \cap 
A_{n_0 +m_0} | 0 < M_0 < 1) > 0$ for some $m_0 \geq 1$), for the sequence of sets 
$\{A_n: n\geq n_0+m_0\}$: the occurrence of $A_n$ implies, with at least a certain probability, the occurrence of $A_{n+1}$, 
which, iterating this argument for any $n\geq n_0+m_0$, is the key point of the $\delta_n$-\emph{fixation}. 
Then, Assumption \ref{ass:2}.\ref{ass:2b} ensures that $g_{n}(\pi_{n})\rightarrow 0$ sufficiently fast to have
$\sum_{n}g_{n}(\pi_{n})<+\infty$, for any $\pi_{n} = O (\prod_{k=0}^n (1 - \delta_k r_k))$. This fact guarantees
that the above triggering mechanism implies the $\delta_n$-\emph{fixation} with a strictly positive probability.

\begin{proof}[Proof of Theorem \ref{th2}]
Without loss of generality, we can assume $P(0<M_0<1)=1$ (otherwise, it is enough to replace $P$ by $P(\cdot|0<M_0<1)$). First note that, on $A_{n+1}$, $M_{n+1} \leq (1-\delta_nr_n) M_n$, so that, for each $m\geq 1$ and $N\geq 1$, on $ \cap_{k=n_0+m}^{n_0+m+N-1} A_{k+1} $, we have 
\begin{equation}\label{eq:Mn2zero}
M_{n_0+m+N} \leq M_{n_0+m} 
\prod_{k=n_0+m}^{n_0+m+N-1} \big( 1 - \delta_{k} r_{k}\big).
\end{equation}
Hence, because of Assumption~\ref{ass:2}.\ref{ass:2a}, we have 
$\cap_{k=n_0+m}^{+\infty} A_{k} \subseteq \{M_{\infty} = 0\}$. 
Moreover, by Assumption~\ref{ass:2}.\ref{ass:2c}, \eqref{eq:PE0_reinf} and \eqref{eq:Mn2zero},
we obtain, $P_{E_{n_0}}$-a.s.
\begin{multline*}
P_{E_{n_0}}\Big( \bigcap_{n=n_0+m}^{n_0+m+N+1} A_{n} \Big| \mathcal{G}_{n_0+m} \Big)
\\
\begin{aligned}
& = E_{E_{n_0}}\Big[ E_{E_{n_0}} [ \mathbbm{1}_{A_{n_0+m+N+1}} | \mathcal{G}_{n_0+m+N}]  \mathbbm{1}_{A_{n_0+m+N}} 
\prod_{n=n_0+m}^{n_0+m+N-1} \mathbbm{1}_{A_{n}}  \Big| \mathcal{G}_{n_0+m} \Big]
\\
& \geq E_{E_{n_0}}\Big[ \big(1 - g_{n_0+m+N} (M_{n_0+m+N}) \big) 
\prod_{n=n_0+m}^{n_0+m+N} \mathbbm{1}_{A_{n}}  
\Big| \mathcal{G}_{n_0+m} \Big]\\
 &\geq \Big(1 - g_{n_0+m+N} \Big( M_{n_0+m} 
\prod_{k=n_0+m}^{n_0+m+N-1} \big( 1 - \delta_{k} r_{k}\big)\Big)\Big)
%\\
%& \qquad \qquad \qquad \qquad \cdot 
P_{E_{n_0}}\Big(
 \bigcap_{n=n_0+m}^{n_0+m+N} A_{n}  \Big| \mathcal{G}_{n_0+m} \Big),
\end{aligned}
\end{multline*}

which leads by induction to
\begin{multline*}
P_{E_{n_0}} \big( \cap_{n=n_0+m}^{n_0+m+N+1} A_{n} \big| \mathcal{G}_{n_0+m} \big) 
\geq\\
\mathbbm{1}_{A_{n_0+m}}
\prod_{n=n_0+m}^{n_0+m+N} 
\Big(1 - g_{n} \Big( M_{n_0+m} 
\prod_{k=n_0+m}^{n-1} \big( 1 - \delta_{k} r_{k}\big)\Big)\Big)\qquad P_{E_{n_0}}-a.s.,
\end{multline*}
that gives \eqref{eq:prob_fixation_0} by \eqref{eq:P_PE0} and letting $N\rightarrow+\infty$ and recalling 
that we have $\cap_{k=n_0+m}^{+\infty} A_{k} \subseteq \{M_{\infty} = 0\}$. 
Finally, let $m=m_0$ as in Assumption~\ref{ass:2}.\ref{ass:2c} such that
$P_{E_{n_0}} (A_{n_0+m_0})>0$.
From \eqref{eq:prob_fixation_0}, 
taking the mean value of both sides, we get that
$P( M_\infty=0)>0$ by 
Assumption~\ref{ass:2}.\ref{ass:2b} as follows
\begin{multline*}
P\big( M_\infty=0 \big) = 
E[P\big( M_\infty=0 \big| \mathcal{G}_{n_0+m_0} \big)]
\geq P( \cap_{n=n_0+m_0}^{+\infty} A_{n} \big| \mathcal{G}_{n_0+m_0} )
\\
\begin{aligned}
&\geq
E\Big[\mathbbm{1}_{E_{n_0}\cap A_{n_0+m_0}}
\prod_{n=n_0+m_0}^{+\infty} \Big( 1 - g_{n} \Big( M_{n_0+m_0} \prod_{k=n_0+m_0}^{n-1} \big( 1 - \delta_{k} r_{k}\big)\Big)\Big)\Big]
\\
&=
E\Big[\mathbbm{1}_{E_{n_0}\cap A_{n_0+m_0}}
\prod_{n=n_0+m_0}^{+\infty} \Big( 1 - g_{n} \Big( \tfrac{M_{n_0+m_0}}{\prod_{k=0}^{n_0+m_0-1} \big( 1 - \delta_{k} r_{k}\big)} \prod_{k=0}^{n-1} \big( 1 - \delta_{k} r_{k}\big)\Big)\Big)\Big]
\\
&\geq
P(E_{n_0} \cap A_{n_0+m_0} )
\prod_{n=n_0+m_0}^{+\infty} \Big( 1 - g_n\Big( \min\big(C \prod_{k=0}^{n-1} (1 - \delta_k r_k) , 1 \big) \Big)  \Big)
\\
&>0,
\end{aligned}
\end{multline*}
where $C^{-1} = \prod_{k=0}^{n_0+m-1} ( 1 - \delta_{k} r_{k})$ and, using the fact that $g_n(\cdot)$ is a non-decreasing function,  
we have replaced $M_{n_0+m_0}\leq 1$ by $1$.
\end{proof}

The next result deals with the case in which the probability of touching the barriers in the limit is not only positive 
as in Theorem \ref{th2}, but it is exactly equal to one.

\begin{theorem}\label{mart-th-bis}
Set
$ L_\infty = \liminf_{n\rightarrow\infty} Var[Y_{n+1}|\mathcal{G}_n]$ and assume
 \begin{equation}\label{eq:condition_liminf_variance}
P (\{M_\infty(1-M_\infty)>0\}\cap
 \{L_\infty = 0\} ) = 0.
 \end{equation}
Then, if $\sum_n r_n^2=+\infty$, 
we have $P(M_\infty=0)+P(M_\infty=1)=1$  and $P(M_\infty=1)=E[M_0]$. 
\end{theorem}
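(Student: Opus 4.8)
The plan is to deduce from $\sum_n r_n^2 = +\infty$ and hypothesis \eqref{eq:condition_liminf_variance} that $P(0<M_\infty<1)=0$, and then to read off $P(M_\infty=1)$ from the martingale property. The driving estimate is that the $r_n^2$-weighted conditional variances are a.s.\ summable:
\[
\sum_{n\geq 0} r_n^2\, Var[Y_{n+1}|\mathcal{G}_n] < +\infty \qquad \text{a.s.}
\]
To obtain this, I would reuse the increment decomposition already computed in the third step of the proof of Theorem~\ref{mart-th}, namely $E[(M_\infty-M_n)^2|\mathcal{G}_n]=\sum_{k\geq n}r_{k}^2 E[(Y_{k+1}-M_k)^2|\mathcal{G}_n]$. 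Taking $n=0$, then total expectation, and using $E[(Y_{k+1}-M_k)^2]=E[Var[Y_{k+1}|\mathcal{G}_k]]$, the orthogonality of martingale increments gives
\[
E\Big[\sum_{n\geq 0} r_n^2\, Var[Y_{n+1}|\mathcal{G}_n]\Big]=\sum_{n\geq 0} r_n^2\, E[(Y_{n+1}-M_n)^2]=E[(M_\infty-M_0)^2]\leq 1,
\]
so the non-negative series is a.s.\ finite (here I use only that $(M_n)_n$ is a bounded, hence $L^2$-bounded, martingale).

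Next I would show $L_\infty = 0$ a.s. The point is purely deterministic: $\sum_n r_n^2=+\infty$. On the event $\{L_\infty>0\}$ one has $Var[Y_{n+1}|\mathcal{G}_n]\geq L_\infty/2>0$ for all $n$ beyond some (random) index, whence $\sum_n r_n^2\, Var[Y_{n+1}|\mathcal{G}_n]=+\infty$ on that event; comparing with the a.s.\ finiteness just established forces $P(L_\infty>0)=0$. Combining $L_\infty=0$ a.s.\ with \eqref{eq:condition_liminf_variance}, I would split
\[
P(0<M_\infty<1)=P\big(\{M_\infty(1-M_\infty)>0\}\cap\{L_\infty=0\}\big)+P\big(\{M_\infty(1-M_\infty)>0\}\cap\{L_\infty>0\}\big),
\]
where the first term is $0$ by \eqref{eq:condition_liminf_variance} and the second is $0$ because $P(L_\infty>0)=0$. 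Hence $P(0<M_\infty<1)=0$, which is precisely $P(M_\infty=0)+P(M_\infty=1)=1$.

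For the last assertion, $M_\infty\in\{0,1\}$ a.s.\ gives $M_\infty=\mathbbm{1}_{\{M_\infty=1\}}$ a.s., so $E[M_\infty]=P(M_\infty=1)$; since $(M_n)_n$ is a bounded (hence uniformly integrable) martingale, $E[M_\infty]=E[M_0]$, and therefore $P(M_\infty=1)=E[M_0]$.

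I do not expect a serious obstacle: the whole argument hinges on the one-line observation that the $r_n^2$-weighted sum of conditional variances has finite expectation (equal to $E[(M_\infty-M_0)^2]$), so that divergence of $\sum_n r_n^2$ can be reconciled only with $\liminf_n Var[Y_{n+1}|\mathcal{G}_n]=0$. The only place demanding a touch of care is the step on $\{L_\infty>0\}$, where both the threshold $L_\infty/2$ and the index past which the conditional variance exceeds it are random; this is harmless because $\sum_n r_n^2=+\infty$ is deterministic and unaffected by dropping finitely many terms.
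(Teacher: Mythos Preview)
Your proof is correct and follows essentially the same approach as the paper: both arguments hinge on the almost sure finiteness of $\sum_n r_n^2\,Var[Y_{n+1}\mid\mathcal{G}_n]$ (the paper phrases this as $\langle M\rangle_\infty<+\infty$ a.s.\ for the bounded martingale $M$), then use $\sum_n r_n^2=+\infty$ to force $L_\infty=0$ a.s.\ and conclude via \eqref{eq:condition_liminf_variance}. The only cosmetic difference is that the paper packages the finiteness via the predictable compensator of $M^2$, whereas you compute the same expectation directly.
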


Condition \eqref{eq:condition_liminf_variance} is a natural assumption. 
It ensures that, asymptotically, the variance of the reinforcement variables $(Y_{n})_n$ is bounded away from zero 
whenever $M_\infty\in(0,1)$. 
If this is not the case, the convergence to the barriers may not be related  
with the type of reinforcement sequence $(r_n)_n$. 
For instance, when $Y_{n+1} = M_{n} $ (that means $ Var[Y_{n+1}|\mathcal{G}_n]=0$) eventually, then $M_{n}$ 
is definitively constant (not equal to $0$ or $1$ when $0<M_0<1$) whatever the sequence $(r_n)_n$ is. 

\begin{proof}[Proof of Theorem \ref{mart-th-bis}]
Let us first denote by $\langle M\rangle=(\langle
  M\rangle_n)_n$ the predictable compensator of the
  submartingale $M^2=(M_n^2)_n$.  Since
  $M$ is a bounded martingale, we have that $ \langle
  M\rangle_n$ converges a.s.\ and its limit $\langle
  M\rangle_\infty$ is such that
  $E[\langle M\rangle_\infty]<+\infty$
  (and so $\langle M\rangle_\infty<+\infty$ a.s.).  Then,
we observe that
\begin{equation*}
     \langle M\rangle_\infty =
 \sum_n ( \langle M\rangle_{n+1}-\langle M\rangle_{n})
 = \sum_n E[( M_{n+1}-M_{n})^2\vert \mathcal{G}_n]
= \sum_n  r^2_n Var[Y_{n+1}\vert \mathcal{G}_n]\,.
  \end{equation*}
Therefore, for each $\epsilon>0$, the event 
 $$
 A_\epsilon= \bigcup_n\bigcap_{k\geq n}\{Var[Y_{k+1}|\mathcal{G}_k]\geq L_\infty-\epsilon\}
 $$
 is contained (up to a negligible set) in the event $\{(L_\infty-\epsilon)\sum_n r_n^2<+\infty\}$.
 Since $\sum r_n^2=+\infty$, this last event coincides with $\{L_\infty\leq \epsilon\}$.
 It follows that, since $P(A_\epsilon)=1$ for each $\epsilon>0$ by the definition of $L_\infty$,  we have 
 $P(L_\infty\leq \epsilon)=1$ for each $\epsilon>0$, from which we get 
 $P(L_\infty=0)=1$ and so, by \eqref{eq:condition_liminf_variance},
  $P(M_\infty(1-M_\infty)>0)=0$, that is $P(M_\infty(1-M_\infty)=0)=1$. 
This concludes the proof of the first statement. For the 
last statement, it is enough to note that  $P(M_\infty=1)=E[M_\infty]=E[M_0]$.
\end{proof}

Now, we conclude the picture with a simple general result.

\begin{prop}\label{mart-th-base}
If $P(0<M_0<1)>0$ and $\sum_n r_n^2<+\infty$, then
  $P(M_\infty=0)+P(M_\infty=1)<1$.  
\end{prop}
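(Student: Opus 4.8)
The plan is to prove the equivalent statement $E[M_\infty(1-M_\infty)]>0$, since $\{M_\infty=0\}$ and $\{M_\infty=1\}$ are disjoint and their union has full probability exactly when $M_\infty(1-M_\infty)=0$ almost surely. The right object is the potential $V(x)=x(1-x)$, which vanishes precisely on the barrier-set $\{0,1\}$. First I would compute the one-step conditional expectation of $V(M_n)$ along the dynamics \eqref{mart-dyn}. Using $E[M_{n+1}|\mathcal{G}_n]=M_n$ together with $Var[M_{n+1}|\mathcal{G}_n]=r_n^2\,Var[Y_{n+1}|\mathcal{G}_n]$, one obtains the exact identity
\[
E[V(M_{n+1})|\mathcal{G}_n]=V(M_n)-r_n^2\,Var[Y_{n+1}|\mathcal{G}_n],
\]
so that $(V(M_n))_n$ is automatically a nonnegative supermartingale.

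The key step is to turn this identity into a quantitative lower bound that survives in the limit. Since $Y_{n+1}\in[0,1]$ gives $Y_{n+1}^2\leq Y_{n+1}$, we have $Var[Y_{n+1}|\mathcal{G}_n]=E[Y_{n+1}^2|\mathcal{G}_n]-M_n^2\leq M_n-M_n^2=V(M_n)$, and hence the \emph{multiplicative} estimate $E[V(M_{n+1})|\mathcal{G}_n]\geq(1-r_n^2)V(M_n)$. I would then de-trend by the deterministic product $\Pi_n=\prod_{k=0}^{n-1}(1-r_k^2)$ (with $\Pi_0=1$) and set $W_n=V(M_n)/\Pi_n$: the last inequality says exactly that $(W_n)_n$ is a submartingale. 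Crucially, the hypothesis $\sum_n r_n^2<+\infty$ guarantees $\Pi_\infty=\prod_{k=0}^{\infty}(1-r_k^2)>0$, so that $W_n\leq V(M_n)/\Pi_\infty\leq 1/(4\Pi_\infty)$ is uniformly bounded.

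To conclude, I would invoke the convergence theorem for bounded submartingales: $W_n\to W_\infty$ almost surely with $E[W_\infty]\geq E[W_0]=E[V(M_0)]$. Because $V(M_n)\to V(M_\infty)$ by continuity and $\Pi_n\to\Pi_\infty$, this reads $E[V(M_\infty)]\geq\Pi_\infty\,E[V(M_0)]$. The assumption $P(0<M_0<1)>0$ forces $E[V(M_0)]>0$, whence $E[V(M_\infty)]>0$, which yields $P(0<M_\infty<1)>0$, i.e. $P(M_\infty=0)+P(M_\infty=1)<1$.

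The main obstacle is precisely the absence of any smallness assumption on $\sum_n r_n^2$. The crude route, namely bounding $E[(M_\infty-M_0)^2|\mathcal{G}_0]\leq\tfrac14\sum_n r_n^2$ and applying a conditional Markov inequality on $\{M_0\in[a,1-a]\}$ to keep $M_\infty$ inside $(0,1)$, only produces a positive bound when $\sum_n r_n^2$ is small relative to $a^2$, and it genuinely breaks down for large total variance. The rescaling argument above is what circumvents this, since it exploits the self-improving bound $Var[Y_{n+1}|\mathcal{G}_n]\leq V(M_n)$ — proportional to the potential itself — rather than the uniform bound $Var\leq\tfrac14$; this is exactly what converts an additive error that could be large into a convergent multiplicative factor $\Pi_\infty>0$.
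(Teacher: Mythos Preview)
Your proof is correct and follows essentially the same route as the paper: both rest on the multiplicative estimate $E[M_{n+1}(1-M_{n+1})\mid\mathcal G_n]\ge(1-r_n^2)\,M_n(1-M_n)$, obtained from $E[Y_{n+1}^2\mid\mathcal G_n]\le E[Y_{n+1}\mid\mathcal G_n]=M_n$. The only cosmetic difference is that the paper takes expectations first and iterates $x_{n+1}\ge(1-r_n^2)x_n$ for $x_n=E[M_n(1-M_n)]$ directly, whereas you rescale by $\Pi_n$ to a bounded submartingale before passing to the limit; the content is identical.
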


\begin{proof}
We note that $P(M_\infty=0)+P(M_\infty=1)=1$ if and only if
  $E[M_\infty(1-M_\infty)]=0$. 
Therefore, we set $x_{n}=E[M_{n}(1-M_n)]$ so that
$E[M_\infty(1-M_\infty)]=\lim_n x_n$ and we observe that
\begin{equation*}
  x_{n+1}=E[M_{n+1}]-E[M_{n+1}^2]=E[M_n]-(1-r_n^2)E[M_n^2]-r_n^2E[Y_{n+1}^2].
\end{equation*}
From this equality, we get
$x_{n+1}=(1-r_n^2)x_n+r_n^2\left(E[M_n]-E[Y_{n+1}^2]\right) \geq
(1-r_n^2)x_n$, because $Y_{n+1}$ takes values in $[0,1]$ and so
$E[Y_{n+1}^2]\leq E[Y_{n+1}]=E[M_n]$.  Thus, we have $x_{n+1}\geq
x_0\prod_{k=0}^n (1-r_k^2)$ for each $n$ and so
$E[M_\infty(1-M_\infty)]\geq x_0\prod_{k=0}^{+\infty}(1-r_k^2)$, where
the infinite product is strictly positive when $\sum_n r_n^2<+\infty$.
\end{proof}

Finally, the following remark can be useful in order to describe the distribution of 
the limit random variable $M_\infty$ in the open interval $(0,1)$.

\begin{remark}\label{rem-TLC-mart}
\rm Arguing exactly as in \cite[Theorem~4.2]{ale-cri-ghi}, we get
$$ 
a_n(M_n-M_\infty)\stackrel{stably}\longrightarrow 
{\mathcal N}(0,\Phi M_\infty(1-M_\infty)),  
$$
(for the definition of the stable convergence, see, for instance, 
Appendix~B in~\cite{ale-cri-ghi} and references therein)
 provided that
$E[(Y_{n+1}-M_n)^2|\mathcal{G}_n]\stackrel{a.s.}\to \Phi M_\infty(1-M_\infty)$,
where $\Phi$ is a suitable bounded positive random variable, which is measurable with respect to 
${\mathcal G}_\infty=\bigvee_n {\mathcal G}_n$,
  and that there exists a sequence
$(a_n)_n$ of positive numbers such that $a_n\to +\infty$
$$
a_n^2 \sum_{k\geq n} r_k^2\longrightarrow 1
\qquad\mbox{and}\qquad
a_n\sup_{k\geq n} r_k\longrightarrow 0.
$$ The above convergence is also in the sense of the almost sure
conditional convergence (see \cite[Definition 2.1]{Cri}) with
respect to $(\mathcal{G}_n)_n$. If $P(\Phi>0)=1$, 
this last fact implies that
$P(M_\infty=z)=0$ for all $z\in (0,1)$ (see the proof of 
\cite[Theorem 2.5]{cri-dai-lou-min}). 
\end{remark}

%%%%%%%%%%%%%%%%%%%%%%%%%%%

\section{Probability of asymptotic polarization for a network of reinforced stochastic processes}
\label{sec:barriers}

We consider a system of
$N\geq 2$ RSPs with a network-based interaction as defined in Section~\ref{intro}.  
Assuming to be in the scenario of
complete almost sure asymptotic synchronization of the system, 
that is when all the stochastic processes $(Z_{n,l})_n$, with $l\in V$, converge almost surely 
toward the same random variable $Z_\infty$ 
(or, in other terms, when $\vZ_n\stackrel{a.s.}\longrightarrow Z_\infty\vone$),   
we are going to describe the
phenomenon of {\em asymptotic polarization}, i.e. 
to determine if the
common limit variable $Z_\infty$ can or cannot belong to the
barrier-set $\{0,1\}$. To this end, in order to
exclude trivial cases, we fix $P(T_0)<1$ with $T_0=\{\vZ_0=\vzero\}\cup\{\vZ_0=\vone\}$ 
and   we collect the conditions that
ensure the complete almost sure asymptotic synchronization of the
system (see \cite[Corollary 2.5]{ale-cri-ghi-complete}) 
in the following assumption:
\begin{assumption}\label{ass:synchro}
Assume that at least one of the following conditions holds true: 
\begin{enumerate}
\item $\sum_n r_n=+\infty$ and $W$ aperiodic,
\item $\sum_n r_n(1-r_n)=+\infty$.
\end{enumerate}
\end{assumption}
(For the definition of periodicity of a matrix please refer to Appendix~\ref{app:periodic_matrix}).
\noindent Under these conditions, the almost sure random limit
$Z_\infty$ of the system is well defined and we can introduce the set
\begin{equation*}
  T_{\infty} =
  \{{\vZ}_n\stackrel{a.s.}\longrightarrow \vone\} \cup
  \{{\vZ}_n\stackrel{a.s.}\longrightarrow \vzero\} =
  \{Z_\infty=1\} \cup \{Z_\infty=0\}.
\end{equation*}
The rest of this section is dedicated to characterize when we have
$P(T_{\infty}\vert T_0^c)=0$ ({\em non-trivial asymptotic polarization is
  negligible}), $0<P(T_{\infty})<1$ ({\em asymptotic polarization with
  a strictly positive probability, but non almost sure}) or
$P(T_{\infty})=1$ ({\em almost sure asymptotic polarization}). 
In particular, for the second case, we will give a condition that 
assures $P(T_\infty|T_0^c)>0$ 
({\em non-trivial asymptotic polarization with a strictly positive probability}).
\\

\indent Before stating the results, we point out that the conditions reported in
Assumption~\ref{ass:synchro} are essential only for the complete almost sure asymptotic synchronization, 
and so for the existence of $Z_\infty$.  Indeed, the provided results
could be also stated for the case $N=1$ omitting
Assumption~\ref{ass:synchro}.  
\\

\indent We highlight that the key-point for the following results is 
that, in the case of complete almost sure asymptotic synchronization,  
the random variable $Z_\infty$ can be seen as the almost sure limit of the martingale 
$$
\widetilde{Z}_n=\vv^\top\vZ_n\,,
$$
where $\vv$ is the (unique) left eigenvector associated to the leading eigenvalue $\vone$ of 
$W^\top$ with all the entries in $(0, +\infty)$ and such that $\vv^\top \vone = 1$.  Indeed, we note that 
the assumption $P(T_0)<1$ corresponds to $P(0<\widetilde{Z}_0<1)>0$ and 
 the stochastic process $(\widetilde{Z}_n)_n$ is a martingale, 
 with respect to the filtration ${\mathcal F}=(\mathcal{F}_n)_n$ associated to the model (see Section~\ref{intro}), which   
 follows the dynamics
\begin{equation}\label{Z-tilde-dynamics}
\widetilde{Z}_{n+1}=(1-r_n)\widetilde{Z}_n+r_n Y_{n+1},\quad\mbox{with } Y_{n+1}=\vv^\top\vX_{n+1}.
\end{equation} 

The first result follows from Theorem \ref{mart-th} and Lemma \ref{lem-equivalent}, and 
it shows sufficient conditions to guarantee that the
probability of touching the barriers in the limit is zero on $T_0^c$,
i.e. on the set where the initial condition is non-trivial.

\begin{theorem}[Non-trivial asymptotic polarization is
  negligible]\label{th1} Under Assumption~\ref{ass:synchro}, if we also have 
  condition \eqref{eq-cond-zero2} or,
  equivalently, \eqref{eq-cond-zero},
then $P(T_\infty\vert T_0^c)=0$.
In particular, these conditions are satisfied when  there exists  
$0<c\leq 1$ such that 
$\lim_n n r_n=c$ and $\sum_n (r_n-cn^{-1})$ is convergent. 
\end{theorem}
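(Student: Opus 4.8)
The plan is to reduce the whole statement to the martingale result of Theorem~\ref{mart-th}, applied to the scalar process $\widetilde Z_n=\vv^\top\vZ_n$ introduced just above. First I would check that $(\widetilde Z_n)_n$ is admissible for that theorem, i.e.\ that it is a $[0,1]$-valued martingale of the form \eqref{mart-dyn}. It takes values in $[0,1]$, being a convex combination of the entries of $\vZ_n\in[0,1]^N$ with the strictly positive weights $\vv$ normalized by $\vv^\top\vone=1$; it is an $\mathcal F$-martingale with the dynamics \eqref{Z-tilde-dynamics}; and its reinforcement variable $Y_{n+1}=\vv^\top\vX_{n+1}$ lies in $[0,1]$ with, by \eqref{eq:cond-mean} and the left-eigenvector identity $\vv^\top W^\top=\vv^\top$,
\[
E[Y_{n+1}\mid\mathcal F_n]=\vv^\top W^\top\vZ_n=\vv^\top\vZ_n=\widetilde Z_n .
\]
Hence $M_n:=\widetilde Z_n$ satisfies all the hypotheses of Theorem~\ref{mart-th}, and since $\vZ_n\to Z_\infty\vone$ a.s.\ with $\vv^\top\vone=1$ we have $M_\infty=Z_\infty$.

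Next I would translate the relevant events. As $\vv$ has strictly positive entries and $\vZ_0\in[0,1]^N$, the equalities $\widetilde Z_0=0$ and $\widetilde Z_0=1$ force $\vZ_0=\vzero$ and $\vZ_0=\vone$ respectively, so $T_0=\{\widetilde Z_0\in\{0,1\}\}$ and therefore $T_0^c=\{0<M_0<1\}$; the hypothesis $P(T_0)<1$ then gives $P(0<M_0<1)>0$, as Theorem~\ref{mart-th} requires. Similarly $T_\infty=\{Z_\infty\in\{0,1\}\}=\{M_\infty\in\{0,1\}\}$. Under condition \eqref{eq-cond-zero2} (equivalently \eqref{eq-cond-zero}, by Lemma~\ref{lem-equivalent}(c)), Theorem~\ref{mart-th} yields $P(M_\infty=0\mid 0<M_0<1)=P(M_\infty=1\mid 0<M_0<1)=0$, and summing these disjoint events gives $P(T_\infty\mid T_0^c)=0$. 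Note that Assumption~\ref{ass:synchro} enters only to guarantee the complete a.s.\ synchronization, hence the existence of the common limit $Z_\infty$ and the martingale identification above; beyond this it plays no role (indeed the argument covers the case $N=1$ without it).

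For the final ``in particular'' clause I would verify \eqref{eq-cond-zero2} directly under $\lim_n nr_n=c$ with $0<c\le1$ and $\sum_n(r_n-cn^{-1})$ convergent. Splitting $\sum_{k=0}^n r_k=r_0+c\sum_{k=1}^n k^{-1}+\sum_{k=1}^n(r_k-ck^{-1})$ and using $\sum_{k=1}^n k^{-1}=\ln n+O(1)$ together with the assumed convergence of the last sum, I get $\sum_{k=0}^n r_k=c\ln n+O(1)$, so that $e^{-\sum_{k=0}^n r_k}\asymp n^{-c}$ and the right-hand side of \eqref{eq-cond-zero2} is $\asymp n^{-c}\ln n$. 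Since $r_n\sim cn^{-1}$, the bound $r_n=O(n^{-c}\ln n)$ reduces to the boundedness of $n^{c-1}/\ln n$, which holds exactly because $0<c\le1$: for $c<1$ the numerator tends to $0$, and for $c=1$ the ratio is $1/\ln n\to0$. This establishes \eqref{eq-cond-zero2} and completes the argument.

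The bulk of the proof is bookkeeping once the martingale identification is made; there is no deep obstacle. The only genuinely computational step is the asymptotics of $\sum_{k=0}^n r_k$ in the last paragraph, and the only point requiring care is the conditional-mean identity $E[Y_{n+1}\mid\mathcal F_n]=\widetilde Z_n$, which is precisely where the left-eigenvector property of $\vv$ is used and which is what turns $\widetilde Z_n$ into a genuine $[0,1]$-valued martingale of the form \eqref{mart-dyn}, thereby unlocking Theorem~\ref{mart-th}.
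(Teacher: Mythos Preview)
Your proposal is correct and follows essentially the same approach as the paper: apply Theorem~\ref{mart-th} to the scalar martingale $\widetilde Z_n=\vv^\top\vZ_n$, identify $T_0^c$ with $\{0<\widetilde Z_0<1\}$ and $T_\infty$ with $\{\widetilde Z_\infty\in\{0,1\}\}$, and for the ``in particular'' clause compute $\sum_{k=0}^n r_k=c\ln n+O(1)$ so that the right-hand side of \eqref{eq-cond-zero2} is of order $n^{-c}\ln n$. You have simply made explicit several verifications (the martingale property via the left-eigenvector identity, the event translations, and the final comparison $r_n\sim c/n=O(n^{-c}\ln n)$ for $0<c\le1$) that the paper leaves to the reader.
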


\begin{proof}
The first statement follows by an application of 
Theorem \ref{th1} to $(\widetilde{Z}_n)_n$. 
Regarding the last statement, we note that  
$\sum_{k=0}^n r_k = c\ln(n) + O(1)$ and then 
$e^{-\sum_{k=1}^n r_k }
\sum_{k=1}^n r_k = O(n^{-c}\ln(n))$.
\end{proof}

Note that, since $\sum_n r_n=+\infty$ (by Assumption~\ref{ass:synchro}),  
conditions \eqref{eq-cond-zero2}, or \eqref{eq-cond-zero}, imply $\sum_n\prod_{k=0}^{n}(1-r_k) = +\infty$ 
  (see \eqref{eq-div-serie} in Lemma \ref{lem-equivalent}). In the next result we consider the opposite condition.

\begin{theorem}[Non-trivial asymptotic polarization with a strictly positive probability]
  \label{th2-rsp}
Under Assumption~\ref{ass:synchro}, if
  \begin{equation*}
\sum_n \prod_{k=0}^n (1-r_k)<+\infty,
  \end{equation*}
then we have $P(Z_\infty=z\vert T_0^c)>0$ for both $z=0$ and $z=1$ and so 
$P(T_\infty\vert T_0^c)>0$. More precisely, we have a strictly positive probability, given $T_0^c$, 
of fixation of all the stochastic processes $\{(X_{n,l})_n : l \in V \}$ on the value $z$ for
both $z = 0$ and $z = 1$. \\ 
\indent In particular, the above
assumptions are satisfied when $\lim_n n^\gamma r_n=c$ and $\sum_n (r_n - c n^{-\gamma})$ is convergent   
for $\gamma=1$ and $c>1$ or for  $1/2<\gamma< 1$ and $c>0$.
\end{theorem}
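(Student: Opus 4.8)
The plan is to derive both statements from the abstract result Theorem~\ref{th2} applied to the synchronization martingale $M_n=\widetilde{Z}_n=\vv^\top\vZ_n$. By \eqref{Z-tilde-dynamics} this $M_n$ has exactly the dynamics \eqref{mart-dyn} with $Y_{n+1}=\vv^\top\vX_{n+1}$ and $\mathcal{G}_n=\mathcal{F}_n$, its a.s.\ limit is $M_\infty=\widetilde Z_\infty=Z_\infty$, and (because every entry of $\vv$ is strictly positive) $\{0<M_0<1\}=T_0^c$. I would treat the barrier $z=0$ first and obtain $z=1$ from the symmetric statement of Remark~\ref{rem-for-1}, applied to $1-\widetilde Z_n=\vv^\top(\vone-\vZ_n)$ and $1-Y_{n+1}=\vv^\top(\vone-\vX_{n+1})$. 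For $z=0$ I specialize Assumption~\ref{ass:2} with $\delta_n\equiv1$, so that $A_{n+1}=\{Y_{n+1}\le 0\}=\{\vX_{n+1}=\vzero\}$; then $\cap_{n\ge n_0+m_0}A_n$ is precisely the event that the action of every agent is eventually $0$, which lies in $\{Z_\infty=0\}$ and yields the claimed fixation assertion (for $z=1$ the corresponding event is $\{\vX_{n+1}=\vone\}$).

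The core of the argument is one deterministic inequality that makes the reinforcement mechanism of Assumption~\ref{ass:2}.\ref{ass:2c} hold on all of $\Omega$. By the conditional independence of $\{X_{n+1,l}\}_l$ given $\mathcal{F}_n$ and \eqref{interacting-1-intro}, $P(A_{n+1}\mid\mathcal{F}_n)=\prod_l\big(1-(W^\top\vZ_n)_l\big)$. Using $1-\prod_l(1-a_l)\le\sum_l a_l$ for $a_l\in[0,1]$ together with $\sum_l(W^\top\vZ_n)_l=\sum_{l_1}R_{l_1}Z_{n,l_1}$, where $R_{l_1}=\sum_l w_{l_1,l}$ are the row sums of $W$, the strict positivity of the $v_{l_1}$ gives, with the deterministic constant $\kappa=\max_{l_1}R_{l_1}/v_{l_1}<+\infty$,
\[
P(A_{n+1}\mid\mathcal{F}_n)\ \ge\ 1-\kappa\,\widetilde Z_n\qquad\text{a.s., for every }n.
\]
Taking the non-decreasing $[0,1]$-valued functions $g_n(x)=\min(\kappa x,1)$, this says $B_n$ holds a.s.\ for all $n$, so $C_{n_0}=\Omega$ up to a null set and I may take $E_{n_0}=\Omega$. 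With $\delta_n\equiv1$, Assumption~\ref{ass:2}.\ref{ass:2a} becomes $\sum_n r_n=+\infty$ (forced by Assumption~\ref{ass:synchro}, since $r_n\ge r_n(1-r_n)$), while Assumption~\ref{ass:2}.\ref{ass:2b} reduces, after reindexing, to $\sum_n\prod_{k=0}^{n}(1-r_k)<+\infty$, i.e.\ the hypothesis. The same computation applied to $\vone-\vZ_n$ gives $P(\vX_{n+1}=\vone\mid\mathcal{F}_n)\ge 1-\kappa(1-\widetilde Z_n)$ with the same $\kappa$, so Assumption~\ref{ass:2} holds verbatim for $(1-M_n,1-Y_n)$.

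The only remaining, and genuinely model-dependent, point is the triggering condition $P(A_{m_0}\mid T_0^c)>0$ for some $m_0\ge1$. Since $P(\vX_{m}=\vzero\mid\mathcal{F}_{m-1})=\prod_l(1-(W^\top\vZ_{m-1})_l)$ is strictly positive exactly when $\max_l Z_{m-1,l}<1$, and a component can equal $1$ only if it started at $1$ and has played $1$ at every step, I must show that with positive probability on $T_0^c$ every component is strictly below $1$ at some finite time. On $T_0^c$ we have $\widetilde Z_0<1$, so some component is $<1$; invoking the strong connectivity of $G$ (irreducibility of $W$, in force in the synchronization scenario of Assumption~\ref{ass:synchro}) this sub-unit value propagates through the influence weights, so that after finitely many steps each initially saturated agent has $(W^\top\vZ)_l<1$ and can play $0$ with positive conditional probability. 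I expect this reachability step to be the main obstacle, being the sole place where the graph structure intervenes. Granting it, Theorem~\ref{th2} gives $P(Z_\infty=0\mid T_0^c)\ge P(\cap_{n\ge n_0+m_0}A_n\mid T_0^c)>0$, and the symmetric construction yields $P(Z_\infty=1\mid T_0^c)>0$; hence $P(T_\infty\mid T_0^c)>0$.

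For the parametric examples I verify $\sum_n\prod_{k=0}^n(1-r_k)<+\infty$ directly. For $1/2<\gamma<1$, by \eqref{eq-equivalence-sumr-logs} one has $-\ln\prod_{k=0}^n(1-r_k)\asymp\sum_{k=0}^n r_k\sim\tfrac{c}{1-\gamma}\,n^{1-\gamma}\to+\infty$, so the product decays faster than any power of $n$ and the series converges for every $c>0$. For $\gamma=1$ the exponent must be pinned down exactly: since here $\sum_n r_n^2<+\infty$, we have $-\ln\prod_{k=0}^n(1-r_k)=\sum_{k=0}^n r_k+O(1)=c\ln n+O(1)$, whence $\prod_{k=0}^n(1-r_k)\asymp n^{-c}$, which is summable precisely when $c>1$. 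This covers both regimes listed in the statement.
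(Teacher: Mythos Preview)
Your proposal is correct and follows essentially the same route as the paper: apply Theorem~\ref{th2} to $M_n=\widetilde Z_n$ with $\delta_n\equiv1$ and a linear $g_n$, obtain the key inequality $P(\vX_{n+1}=\vzero\mid\mathcal F_n)\ge 1-C\,\widetilde Z_n$ via the union bound, and use irreducibility of $W$ for the triggering step. The only cosmetic differences are that the paper takes $C=1/v_{\min}$ (bounding $\vone^\top W^\top\vZ_n$ by $\tfrac{1}{v_{\min}}\vv^\top W^\top\vZ_n=\tfrac{1}{v_{\min}}\widetilde Z_n$) rather than your $\kappa=\max_{l}R_l/v_l$, and that it handles the parametric examples by quoting Lemma~\ref{lemma-product} instead of your direct computation; the triggering step you flag as the main obstacle is dispatched in the paper exactly as you sketch, by noting that irreducibility makes $\{0<Z_{m_*,l}<1\ \forall l\}$ have positive probability on $T_0^c$ for some finite $m_*$.
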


\begin{proof}
This result follows from Theorem \ref{th2} applied to $\widetilde{Z}_n$ and to $(1-\widetilde{Z}_n)$ (see Remark \ref{rem-for-1})
with $n_0=0$, $\delta_n\equiv 1$, $\mathcal{G}_n=\mathcal{F}_n$, $g_n(x)=g(x)=\min\big(\tfrac{x}{v_{\min}},1\big)$ $\forall n$,
where $v_{\min}=\min_l v_l>0$, $E_{n_0}=C_{n_0}$. 
Indeed, we have $A_{n+1}=\{Y_{n+1}=0\}=\cap_{l=1}^N
\{X_{n+1,l}=0\}$ and so a.s.
\[
\begin{aligned}
P(A_{n+1}^c\vert\, \mathcal{G}_n)&=P(\cup_{l=1}^N\{X_{n+1,l}=1\}\vert\, \mathcal{F}_n) 
\\ 
&\leq
\sum_{l=1}^N[W^\top \vZ_n]_l
= \vone^\top W^\top \vZ_n 
\\ 
&\leq
\frac{1}{v_{\min}}\vv^\top W^\top \vZ_n =
\frac{1}{v_{\min}}\vv^\top \vZ_n =
\frac{1}{v_{\min}}\widetilde{Z}_n.
\end{aligned}
\]
It follows $P(A_{n+1}\vert\,\mathcal{G}_n)\geq 1-g(\widetilde{Z}_n)$ a.s. for each $n$.  
This means $P(B_n)=1$ for each $n$, $C_0\in\mathcal{G}_0$ and $P(C_0)=1$.
Assumption \ref{ass:2}.\ref{ass:2b} is simply to be verified as, for any $C>0$,
\begin{align*}
\sum_{n=1}^\infty g_n\Big( \min\Big(C \prod_{k=0}^{n-1} (1 - \delta_k r_k) , 1 \Big) \Big) 
& =
\sum_{n=1}^\infty \min\Big( \frac{C \prod_{k=0}^{n-1} (1 -  r_k)}{v_{\min}} , 1 \Big) 
\\
& \leq
\frac{C}{v_{\min}}
\sum_{n=1}^\infty \prod_{k=0}^{n-1} (1 -  r_k) 
\\
& =
\frac{C}{v_{\min}}
\sum_{n=0}^\infty \prod_{k=0}^n (1 -  r_k) 
\\
&< +\infty.
\end{align*}
Let us now focus on Assumption \ref{ass:2}.\ref{ass:2c} (with $n_0=0$ and $E_{n_0}=C_{n_0}$), 
and in particular on proving that, for some 
$m_0\geq 1$, $P(A_{n_0+m_0}|T_0^c)>0$ holds true.
To this end, we observe that, by the irreducibility of $W$ and the assumption
$P(T_0)< 1$, there exists a time-step $m_*$ such that, the event $\{0 < Z_{m_*,l} < 1 : \forall l\in V\}$ 
has a strictly positive probability under $P(\cdot |T_0^c)$. 
Then, we have
\begin{align*}
P(A_{m_*+1}|T_0^c) 
&=
E[\cap_{l=1}^N
\{X_{m_*+1,l}=0\}|\, T_0^c]
=
 E\Big[\prod_{l=1}^N(1-Z_{m_*,l})|\, T_0^c\Big]
 \\
 &\geq
 E\Big[(1-\max_{l\in V} Z_{m_*,l})^N|\, T_0^c\Big]>0.
\end{align*}
Therefore Assumption \ref{ass:2}.\ref{ass:2c} holds true with $m_0=m_*+1$. 
Hence, we can apply Theorem \ref{th2} to $\widetilde{Z}_n$ and  
we get 
$$
P(Z_\infty = 0\vert\, T_0^c) \geq P(\cap_{n=n_0+m_0}^{+\infty} A_n|T_0^c) > 0\,,
$$
where $\cap_{n=n_0+m_0}^{+\infty} A_n=\cap_{n\geq n_0+m_0,\,l=1,\dots,N}
\{X_{n+1,l}=0\}$.\\
By simmetry $(1-\widetilde{Z}_n)$ and $(1-Y_{n})$
also satisfy Assumption \ref{ass:2} (with the same 
$n_0$, $\delta_n$, $g_n$, $E_{n_0}$ and $m_0$) and so we get
$$P(Z_\infty = 1\vert\, T_0^c)
\geq P(\cap_{n\geq n_0+m_0,\,l=1,\dots,N}
\{X_{n+1,l}=1\}|T_0^c) >0\,.
$$
\indent The last statement of Theorem~\ref{th2-rsp} follows from Lemma~\ref{lemma-product}: indeed,
when $\gamma=1$ and $c>0$, we have $\prod_{k=0}^n(1-r_k) = O(n^{-c})$ and the series
$\sum_n n^{-c}$ is convergent when $c>1$; while, when $1/2<\gamma<1$ and $c>0$, we have 
$\prod_{k=0}^n(1-r_k) = O(\exp( - C n^{1-\gamma}))$ (with $C=c/(1-\gamma)$) and the series 
$\sum_n \exp(-Cn^{1-\gamma})$ is always convergent.
\end{proof}

The last result follow from Theorem \ref{mart-th-bis} and Proposition \ref{mart-th-base}, and 
it affirms that the probability of asymptotic polarization is strictly smaller than or equal to $1$ 
according to the convergence or not of the series $\sum_n r_n^2$.

\begin{theorem}\label{th3}
Under Assumption~\ref{ass:synchro}, we have:
  \begin{itemize}
\item[(i)] If $\sum_n r_n^2<+\infty$, then
  $P(T_\infty)<1$ (non-almost sure asymptotic polarization);
\item[(ii)] If $\sum_n r_n^2=+\infty$, then $P(T_\infty)=1$ 
 (almost sure asymptotic polarization) and 
$P(Z_\infty=1)=\boldsymbol{v}^\top E[\boldsymbol{Z}_0]$.
\end{itemize}
In particular, when $\lim_n n^{\gamma}r_n= c>0$, case
i) is verified if $1/2<\gamma\leq 1$ and case ii) is verified when
$0<\gamma\leq 1/2$.
\end{theorem}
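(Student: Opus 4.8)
The plan is to deduce Theorem~\ref{th3} directly from the corresponding general martingale results applied to the synchronization martingale $\widetilde{Z}_n = \vv^\top \vZ_n$, exactly as Theorems~\ref{th1} and \ref{th2-rsp} were derived from Theorems~\ref{mart-th} and \ref{th2}. Under Assumption~\ref{ass:synchro} we have complete almost sure asymptotic synchronization, so $Z_\infty$ is well defined and coincides with the almost sure limit $M_\infty$ of the martingale $M_n = \widetilde{Z}_n$, which follows the dynamics \eqref{Z-tilde-dynamics} with $Y_{n+1} = \vv^\top \vX_{n+1}$. Moreover $P(T_0)<1$ translates into $P(0<M_0<1)>0$, and the barrier events $\{Z_\infty=0\}$ and $\{Z_\infty=1\}$ correspond to $\{M_\infty=0\}$ and $\{M_\infty=1\}$, so that $T_\infty = \{M_\infty=0\}\cup\{M_\infty=1\}$.

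For part~(i), I would simply invoke Proposition~\ref{mart-th-base}: since $P(0<M_0<1)>0$ and $\sum_n r_n^2<+\infty$, we obtain $P(M_\infty=0)+P(M_\infty=1)<1$, which is exactly $P(T_\infty)<1$. For part~(ii), the strategy is to apply Theorem~\ref{mart-th-bis}, which requires checking the variance condition \eqref{eq:condition_liminf_variance} for $Y_{n+1}=\vv^\top\vX_{n+1}$. The key computation is to show that on the synchronization event, $\Var[Y_{n+1}|\mathcal{F}_n]$ is asymptotically bounded below by something proportional to $Z_\infty(1-Z_\infty)$, so that $L_\infty=0$ forces $Z_\infty(1-Z_\infty)=0$ almost surely. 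Concretely, since the $X_{n+1,l}$ are conditionally independent given $\mathcal{F}_n$ with $P(X_{n+1,l}=1|\mathcal{F}_n)=[W^\top\vZ_n]_l$, one computes
\[
\Var[Y_{n+1}|\mathcal{F}_n]=\sum_{l=1}^N v_l^2\,[W^\top\vZ_n]_l\big(1-[W^\top\vZ_n]_l\big),
\]
and on $\{\vZ_n\to Z_\infty\vone\}$ each $[W^\top\vZ_n]_l\to Z_\infty$, so the right-hand side converges to $\big(\sum_l v_l^2\big)Z_\infty(1-Z_\infty)$, giving $L_\infty = \big(\sum_l v_l^2\big)Z_\infty(1-Z_\infty)$ on the synchronization set and hence \eqref{eq:condition_liminf_variance}. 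With this verified and $\sum_n r_n^2=+\infty$, Theorem~\ref{mart-th-bis} yields $P(M_\infty=0)+P(M_\infty=1)=1$, i.e.\ $P(T_\infty)=1$, together with $P(M_\infty=1)=E[M_0]=\vv^\top E[\vZ_0]$, which is the claimed formula.

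The final statement about $\lim_n n^\gamma r_n=c>0$ is routine: this asymptotics gives $r_n^2\asymp c^2 n^{-2\gamma}$, so $\sum_n r_n^2$ converges precisely when $2\gamma>1$, i.e.\ $1/2<\gamma\le 1$ (case~(i)), and diverges when $2\gamma\le 1$, i.e.\ $0<\gamma\le 1/2$ (case~(ii)). The main obstacle I anticipate is verifying condition~\eqref{eq:condition_liminf_variance} cleanly, since one must argue that the $\liminf$ of the conditional variance is genuinely controlled on the synchronization event and is not spuriously annihilated along a subsequence; the conditional independence structure and the strict positivity of all entries $v_l$ of the Perron eigenvector are what make the lower bound work, and some care is needed to ensure the convergence $[W^\top\vZ_n]_l\to Z_\infty$ holds simultaneously for all $l$ on the full-probability synchronization set guaranteed by Assumption~\ref{ass:synchro}.
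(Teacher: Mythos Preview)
Your proposal is correct and matches the paper's proof essentially line for line: part~(i) via Proposition~\ref{mart-th-base}, part~(ii) via Theorem~\ref{mart-th-bis} after the same conditional-variance computation $\mathrm{Var}[Y_{n+1}\mid\mathcal{F}_n]=\sum_l v_l^2[W^\top\vZ_n]_l(1-[W^\top\vZ_n]_l)$, and the final clause via comparison of $\sum_n r_n^2$ with $\sum_n n^{-2\gamma}$. Your anticipated obstacle is not a real one---under Assumption~\ref{ass:synchro} the convergence $\vZ_n\to Z_\infty\vone$ holds a.s., so the conditional variance actually converges (not merely in $\liminf$) to $(\sum_l v_l^2)Z_\infty(1-Z_\infty)$ and \eqref{eq:condition_liminf_variance} is immediate; the only small addition in the paper is that it also checks that $\lim_n n^\gamma r_n=c>0$ with $0<\gamma\le 1$ automatically forces $\sum_n r_n(1-r_n)=+\infty$, so Assumption~\ref{ass:synchro} is itself satisfied in the ``In particular'' regime.
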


\begin{proof}
Statement (i) follows from Proposition \ref{mart-th-base}.\\
Statement (ii) follows from Theorem \ref{mart-th-bis}, with $\mathcal{G}_n=\mathcal{F}_n$,
since
$$Var[Y_{n+1}|\mathcal{F}_n]=\sum_{l=1}^N v_l^2 [W^\top \vZ_n]_l(1-[W^\top \vZ_n]_l).$$
Then, we have 
$$
Var[Y_{n+1}|\mathcal{F}_n] \stackrel{a.s.}\longrightarrow L_\infty =
\left(\sum_{l=1}^N v_l^2\right) Z_\infty(1-Z_\infty),
$$
and so \eqref{eq:condition_liminf_variance} 
is trivially satisfied.\\

Regarding the last statement of Theorem \ref{th3}, 
we note that $r_n\to 0$ and, since $r_n n^\gamma/c\to 1$, 
the series $\sum_n r_n$ has the same behaviour as the series $c\sum_n
n^{-\gamma}$ (that is they are both convergent or both divergent). 
This last series diverges for $0<\gamma\leq 1$ and $c>0$. Therefore, in that case, the
second condition in Assumption~\ref{ass:synchro} is
verified. Similarly, the series $\sum_n r_n^2$ has the same behaviour  as 
the series $c\sum_n n^{-2\gamma}$,
which is convergent or not according to $2\gamma>1$ or not.
\end{proof}

In the special case of a single process, the first part of
Theorem~\ref{th3} is in accordance with \cite{pemantle-time-dependent,
  sidorova}. Indeed, \cite[condition (1.4)]{sidorova} corresponds to
$\sum_n (\frac{r_n}{1-r_n})^2<+\infty$, that is $\sum_n r_n^2<+\infty$.  
Moreover, Theorem~\ref{th3} agrees with
\cite[Theorem~2.1]{cri-dai-lou-min}, where $\sum_n r_n^2=+\infty$ is
given as sufficient and necessary condition for the almost sure
asymptotic polarization of the process $(\sum_{l=1}^N Z_{n,l}/N)_n$,
under the mean-field interaction. 
\\

\indent Finally, for the sake of completeness, we conclude this section with the following remark:
\begin{remark}\label{rem-no-atoms-urns}
Given the complete almost sure asymptotic 
synchronization of the system, 
since 
$E[(\vv^\top \vX_{n+1}-\widetilde{Z}_{n})^2|\mathcal{F}_n]\stackrel{a.s.}\to \Phi Z_\infty(1-Z_\infty)$, with $\Phi=\|\vv\|^2>0$, 
we can apply Remark~\ref{rem-TLC-mart} to
$(\widetilde{Z}_n)_n$ so obtaining $P(Z_\infty=z)=0$ for all $z\in (0,1)$, 
provided that there exists a sequence
$(a_n)_n$ such that $a_n\to +\infty$,
$$
a_n^2 \sum_{k\geq n} r_k^2\longrightarrow 1,
\qquad\mbox{and}\qquad
a_n\sup_{k\geq n} r_k\longrightarrow 0\,.
$$ 
For instance, this fact is verified when $\lim_n n^\gamma r_n=c$ with $c>0$ and
$1/2<\gamma\leq 1$ (it is enough to take $a_n=n^{\gamma-1/2}\sqrt{(2\gamma-1))}/c$).
\end{remark}

%%%%%%%%%%%%%%%%%%%%%%%%%%%%%%%%%%%%%%%%
%%%%%%%%%%%%%%%%%%%%%%%%%%%%%%%%%%%%%%%%

\section{Estimation of the probability of asymptotic polarization}\label{sec:estimation}

Given the theoretical results about the probability of asymptotic polarization for a network of reinforced stochastic processes, 
our next scope is to provide a procedure in order to estimate this probability, given the observation of the system until a certain time-step.  
In this section, and in the next one, we again face 
the problem in the general setting of a $\mathcal G$-martingale 
$M=(M_n)_{n\geq 0}$ taking values in $[0,1]$ and following  the dynamics 
\eqref{mart-dyn}. Here, the filtration ${\mathcal G}=({\mathcal G}_n)_n$ assumes the meaning of the 
information collected until time-step $n$, i.e. the observed past until time-step $n$, and we aim at providing consistent estimators for 
the conditional probabilities $P_{0,n}=P(M_\infty=0|\mathcal{G}_n)$ and $P_{1,n}=P(M_\infty=1|\mathcal{G}_n)$. \\
\indent We point out that we will not use the lower bound \eqref{eq:prob_fixation_0} since this bound has been obtained evaluating 
the probability that the process converges to the barrier $\{0\}$ by $\delta_n$-\emph{fixation}, 
i.e. evaluating the probability of the event 
$\cap_{n=n_0+m_0}^{+\infty} A_n$. However, in general it could be possible that the process touches the barrier $\{0\}$ in the limit 
without $\delta_n$-fixation. Hence \eqref{eq:prob_fixation_0} would not provide
a consistent estimate for the considered probability.
\\

\indent In the sequel we assume $\sum_n r_n^2<+\infty$, because, as previously shown, 
this condition assures that the probability of asymptotic polarization is 
strictly less than $1$ when $P(0<M_0<1)>0$ (see Proposition \ref{mart-th-base}).  
In this framework, we present some {\em strongly consistent estimators} for the 
conditional probabilities $P_{0,n}=P(M_\infty=0|\mathcal{G}_n)$ and $P_{1,n}=P(M_\infty=1|\mathcal{G}_n)$.
Naturally, the estimation makes sense when the observed value $M_0$ belongs to $(0,1)$ 
(otherwise we trivially have $M_\infty=0$ when $M_0=0$, or $M_\infty=1$ when $M_0=1$, with probability one). 
\\

\indent We start by proving the following result:
\begin{prop}\label{thm:Azuma}
Assume $\sum_n r_n^2< +\infty$. Then, the random variables 
\begin{equation}\label{eq:Azuma}
  U_{0,n}=
\exp\left(-\frac{2M_n^2}{\sum_{k=n}^{\infty} r_k^2}\right)
\quad\mbox{and}\quad
  U_{1,n} = 
\exp\left(-\frac{2(1-M_n)^2}{\sum_{k=n}^{\infty} r_k^2}\right)
\end{equation}
provide almost sure upper bounds for
$P_{0,n}$ and $P_{1,n}$, 
respectively, such that
$$
U_{0,n}-P_{0,n}\stackrel{a.s.}\longrightarrow 0\quad\mbox{and}\quad
U_{1,n}-P_{1,n}\stackrel{a.s.}\longrightarrow 0.
$$
\end{prop}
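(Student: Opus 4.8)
The plan is to read off the upper bounds from the Azuma--Hoeffding inequality applied to the martingale $(M_k)_{k\geq n}$ conditionally on $\mathcal{G}_n$, and then to establish the asymptotic equivalence $U_{0,n}-P_{0,n}\to 0$ by showing that both quantities converge to $\mathbbm{1}_{\{M_\infty=0\}}$. The statements for $U_{1,n}$ and $P_{1,n}$ will follow from the symmetry $M_n\mapsto 1-M_n$, $Y_{n+1}\mapsto 1-Y_{n+1}$, which preserves the dynamics \eqref{mart-dyn} and maps $\{M_\infty=1\}$ onto $\{M_\infty=0\}$; hence I would only treat the barrier $\{0\}$.

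For the upper bound, I would observe that the increments $M_{k+1}-M_k=r_k(Y_{k+1}-M_k)$ lie, conditionally on $\mathcal{G}_k$, in the interval $[-r_kM_k,\,r_k(1-M_k)]$, of length $r_k$. The conditional Hoeffding lemma then gives $E[e^{\lambda(M_{k+1}-M_k)}\vert\mathcal{G}_k]\leq e^{\lambda^2 r_k^2/8}$, and iterating via the tower property yields $E[e^{\lambda(M_N-M_n)}\vert\mathcal{G}_n]\leq \exp\big(\tfrac{\lambda^2}{8}\sum_{k=n}^{N-1}r_k^2\big)$. A Chernoff bound optimized in $\lambda$ produces, for every $t>0$ and $N>n$,
\[
P(M_N-M_n\leq -t\,\vert\,\mathcal{G}_n)\leq \exp\Big(-\frac{2t^2}{\sum_{k=n}^{N-1}r_k^2}\Big)\leq \exp\Big(-\frac{2t^2}{\sum_{k\geq n}r_k^2}\Big).
\]
To pass to $N\to\infty$, I would use that $M_N\to M_\infty$ a.s.\ implies $\{M_\infty-M_n\leq -t\}\subseteq \liminf_N\{M_N-M_n\leq -t'\}$ for any $t'<t$, so conditional Fatou followed by $t'\uparrow t$ gives $P(M_\infty-M_n\leq -t\,\vert\,\mathcal{G}_n)\leq \exp(-2t^2/\sum_{k\geq n}r_k^2)$. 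Since $M_\infty\geq 0$, the event $\{M_\infty=0\}$ coincides with $\{M_\infty-M_n\leq -M_n\}$, and applying the inequality with the $\mathcal{G}_n$-measurable threshold $t=M_n$ gives $P_{0,n}\leq U_{0,n}$ a.s.

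For the convergence $U_{0,n}-P_{0,n}\to 0$, I would first recall that, by L\'evy's upward theorem, $P_{0,n}=E[\mathbbm{1}_{\{M_\infty=0\}}\vert\mathcal{G}_n]\to \mathbbm{1}_{\{M_\infty=0\}}$ a.s., so it suffices to prove $U_{0,n}\to \mathbbm{1}_{\{M_\infty=0\}}$ a.s. On $\{M_\infty>0\}$ this is immediate: $M_n\to M_\infty>0$ while $\sum_{k\geq n}r_k^2\to 0$, so the exponent tends to $-\infty$ and $U_{0,n}\to 0$. The delicate case is $\{M_\infty=0\}$, where I must show that the exponent stays bounded, i.e.\ $M_n^2=o\big(\sum_{k\geq n}r_k^2\big)$, so that $U_{0,n}\to 1$. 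Here I would reuse the second-moment estimate from the third step of the proof of Theorem~\ref{mart-th}, namely $M_n^2\,\mathbbm{1}_{\{M_\infty=0\}}\leq (M_\infty-M_n)^2$ together with $E[(M_\infty-M_n)^2\vert\mathcal{G}_n]\leq M_n\sum_{k\geq n}r_k^2$; taking conditional expectations gives $M_n^2 P_{0,n}\leq M_n\sum_{k\geq n}r_k^2$, hence $M_n P_{0,n}\leq \sum_{k\geq n}r_k^2$. Since $P_{0,n}\to 1$ on $\{M_\infty=0\}$, eventually $P_{0,n}\geq 1/2$ there, so $M_n\leq 2\sum_{k\geq n}r_k^2$ and therefore $M_n^2/\sum_{k\geq n}r_k^2\leq 2M_n\to 0$, forcing $U_{0,n}\to 1$ on $\{M_\infty=0\}$.

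I expect the main obstacle to be exactly this last step: controlling the rate at which $M_n\to 0$ on $\{M_\infty=0\}$ relative to the tail $\sum_{k\geq n}r_k^2$. The Azuma estimate supplies only the one-sided upper bound $P_{0,n}\leq U_{0,n}$, and the matching lower asymptotics of $U_{0,n}$ cannot come from Azuma alone; it must be extracted by coupling the intrinsic second-moment bound of the dynamics with the zero-one behaviour of $P_{0,n}$ guaranteed by the martingale convergence theorem.
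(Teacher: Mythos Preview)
Your argument is correct, and the derivation of the upper bound via Hoeffding's lemma, the tower property, and the Chernoff optimisation is exactly the route taken in the paper (the paper passes to $N\to\infty$ by dominated convergence on the moment generating function rather than through Fatou on the tail probabilities, but this is a cosmetic difference). The treatment of the case $\{M_\infty>0\}$ is also identical.

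Where you diverge from the paper is precisely the step you flag as the ``main obstacle'': showing $U_{0,n}\to 1$ on $\{M_\infty=0\}$. Your route through the second-moment estimate $M_n^2 P_{0,n}\leq M_n\sum_{k\geq n}r_k^2$ is valid and yields the quantitative bound $M_n\leq 2\sum_{k\geq n}r_k^2$ eventually on that event, but it is unnecessary. The paper simply observes that you have already proved $P_{0,n}\leq U_{0,n}\leq 1$ a.s., so on $\{M_\infty=0\}$ the convergence $P_{0,n}\to 1$ (from L\'evy's theorem) forces $U_{0,n}\to 1$ by squeezing. In other words, the inequality you worked hard to establish already delivers the ``delicate'' case for free; no rate information on $M_n$ is needed. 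Your approach does buy something the paper's does not, namely an explicit control of $M_n$ against the tail $\sum_{k\geq n}r_k^2$ on the polarisation event, but for the statement as written the sandwich argument suffices.
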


\begin{proof} We observe that $-r_n(1-M_n)\leq M_n-M_{n+1}=r_n(M_n-Y_{n+1})\leq r_n M_n$ and so,
by Hoeffding's lemma (applied to $M_k-M_{k+1}$ and to $E[\cdot\,|\,\mathcal{G}_k]$ with $a=-r_k(1-M_k)$ and $b=r_kM_k$),
we  have for each $k$ and $t\in\mathbb{R}$
$$
E\left[e^{t(M_k-M_{k+1})}\ |\ \mathcal{G}_k\right]\leq e^{\frac{1}{8} t^2 r_k^2}\quad\mbox{a.s.}\,.
$$
Hence, for each $K>n$ and $t\in\mathbb{R}$, since $M_n-M_K=\sum_{k=n}^{K-1} (M_k-M_{k+1})$, we get 
$$
E[e^{t(M_n-M_K)}\ |\ \mathcal{G}_n]=
E\left[\prod_{k=n}^{K-1} 
E\left[e^{t(M_k-M_{k+1})}\ |\ \mathcal{G}_k\right]\ |\ 
\mathcal{G}_n\right]
\leq e^{\frac{1}{8}t^2\sum_{k=n}^{K-1} r_k^2}\quad\mbox{a.s.}\,.
$$
Taking $K\to +\infty$ (and recalling that $M_K\stackrel{a.s.}\to M_\infty$ 
and $e^{t(M_n-M_K)}\leq e^t$ since $M_n\in [0,1]$ for each $n$), we find
$$
E[e^{t(M_n-M_\infty)}\ |\ \mathcal{G}_n]\leq e^{\frac{1}{8}t^2\sum_{k=n}^\infty r_k^2}\quad\mbox{a.s.}\,.
$$
Therefore, for each $\lambda>0$, we obtain   
\begin{equation*}
\begin{split}
P(M_\infty = 0\ |\ \mathcal{G}_n) &= P(M_n-M_\infty= M_n\ |\ \mathcal{G}_n)\\
&\leq P(M_n-M_\infty\geq M_n\ |\ \mathcal{G}_n)
=P(e^{\lambda(M_n-M_\infty)} \geq e^{\lambda M_n}\ |\ \mathcal{G}_n)\\
& \leq 
e^{-\lambda M_n}E\left[e^{\lambda (M_n-M_\infty)}\ |\ \mathcal{G}_n\right]\\
& \leq 
e^{-\lambda M_n+\frac{1}{8}\lambda^2\sum_{k=n}^\infty r_k^2}\quad\mbox{a.s.}\,.
\end{split}
\end{equation*}
Choosing $\lambda = 4M_n/\sum_{k=n}^\infty r_k^2$ ($>0$ if $M_n>0$, but for $M_n=0$ 
the upper bound is trivial!) in order to minimize the above expression, 
we obtain $P_{0,n}\leq U_{0,n}$ almost surely. Moreover, we know that 
$P_{0,n}=E[I_{0}(M_\infty)|\mathcal{G}_n] \stackrel{a.s.}\to I_{0}(M_\infty)$, 
where $I_0$ denotes the indicator function of the set $\{0\}$, that is $I_0(x)=1$ if $x=0$ or $I_0(x)=0$ otherwise.  
Hence, when $P_{0,n}\to 0$, that is when $M_\infty > 0$,  
by the fact that $\sum_n r_n^2<+\infty$, we have that
$a.s.-\lim_{n\rightarrow \infty}\frac{\sum_{k=n}^\infty r_k^2}{M_n^2}= 
\frac{\lim_{n\rightarrow \infty}\sum_{k=n}^\infty r_k^2}{M_\infty} = 0$ 
and so $U_{0,n}\stackrel{a.s.}\to 0$. When $P_{0,n}\not\to 0$, that is when $P_{0,n}\stackrel{a.s.}\to 1$, 
we also have $U_{0,n}\stackrel{a.s.}\to 1$, because, by construction, 
$P_{0,n}\leq U_{0,n}\leq 1$ a.s. for each $n$. 
\\ \indent 
Analogously, we can compute the upper bound for the barrier $1$, i.e. 
$U_{1,n}$, and prove that 
$U_{1,n}-P_{1,n}\stackrel{a.s.}\to 0$.
\end{proof}

The above upper bounds could be used in order to have consistent 
estimators of $P_{0,n}$ and $P_{1,n}$, respectively. However, at a fixed $n$
the difference between $U_{z, n}$ and $P_{z, n}$, with $z\in\{0,1\}$,
may be large. Therefore, in applications, given the observed past until time-step $n$,
we can get better estimates if we replace 
$U_{z,n}$ by $U'_{z,n,t}=E[U_{z,t}| \mathcal{G}_n]$ with $t>n$.  
Indeed, by Blackwell-Dubins result 
(see \cite[Theorem~2]{black-dubin} or \cite[Lemma~A.2(d)]{Cri}), 
we have 
\begin{equation*}
U'_{z,n,t}-P_{z,n}=E[U_{z,t}-P_{z,t}\,|\,\mathcal{G}_n]\stackrel{a.s.}\longrightarrow 0\quad\mbox{for } t\to +\infty.
\end{equation*}
The quantity $U'_{z,n,t}$ 
can be estimated by simulating a large number $K\in\mathbb{N}$ of realizations of $M_t$ 
based on the observed past $\mathcal{G}_n$ (say $\{M_t^j,\, j=1,\dots,K\}$), then computing the 
corresponding realizations of $U_{z,t}$ by the above formulas (say $\{U_{z,t}^j,\, j=1,\dots,K\}$), 
and finally averaging over these realizations, so that we get 
$$
U'^K_{z,n,t} = \frac{1}{K}\sum_{j=1}^{K}U^j_{z,t},
\qquad\mbox{where}\qquad
\lim_{K\rightarrow \infty} U'^K_{z,n,t} \stackrel{a.s.} = E[U_{z,t}\,|\,\mathcal{G}_n] \stackrel{a.s.}= U'_{z,n,t}.
$$
It is important to notice that the increase of $t$ has only a computational cost
but it does not have anything to do with the increasing of the number of observed data, 
which depends on $n$. Some pratical guidelines on the choice of the time-step $t>n$ are reported
in Section~\ref{app:guideline_time_t} of the Appendix.

\begin{remark} \rm
Note that the random variables~\eqref{eq:Azuma} represent only one of the multiple bounds that can be used to
create analogous consistent estimators of the probability of asymptotic polaritation $P_{0,n}$ and $P_{1,n}$.
Indeed, the methodology presented in this paper works as well with other bounds (see \cite{BouLugMas_book}), 
e.g. Chebychev, Bennet, etc\ldots 
\end{remark}

%%%%%%%%%%%%%%%%%%%%%%%%%%%%%%%%%%%%%%%

\section{A confidence interval for $M_\infty$}\label{sec:interval}

In this section we define an asymptotic confidence interval for $M_\infty$ given $\mathcal{G}_n$, i.e.
based on the information collected until time-step $n$. 
Analogously to the estimators $U'^K_{z,n,t}$, for $z\in\{0,1\}$, presented in the previous section,
also this interval is built by 
simulating a large number $K\in\mathbb{N}$ of realizations of $M_t$ (with $t>n$)
based on the observed past $\mathcal{G}_n$.
However, as we will see in Remark \ref{rem:level_without_conditioning}, the confidence interval cannot be simply constructed
using the quantiles of the empirical distribution $M_t$ given $\mathcal{G}_n$
obtained in simulation,
because in that case the desired confidence level $1-\alpha$ would not be guaranteed.
Instead, we define an asymptotic confidence interval for $M_\infty$ as a suitable union of some of these
 three components:  
\begin{itemize}
\item[(i)] an asymptotic confidence interval constructed for the case $0<M_\infty<1$ 
(see Subsection \ref{sec:interval_distrib_M_t_c} for the details); 
\item[(ii)] the barrier set $\{0\}$, in order to include the possible case 
$M_\infty=0$;
\item[(iii)] the barrier set $\{1\}$, in order to include the possible case
$M_\infty=1$.
\end{itemize}
The presence or absence of each one of the above three sets (and the confidence level
chosen for the interval (i)) in the union depends on the estimated probability, given $\mathcal{G}_n$, of 
the events $\{M_\infty=0\}$, $\{M_\infty=1\}$ and $\{0<M_\infty<1\}$, i.e. on  
the estimates of $P_{0,n}$, $P_{1,n}$ and $P_{(0,1),n}=1-P_{0,n}-P_{1,n}$, respectively, proposed 
in the previous section.
\\

\indent Given $\alpha\in (0,1)$, we denote by $I^K_{n,t,1-\alpha}$ 
the asymptotic confidence interval for $M_\infty$ that 
we want to construct in this section. First of all, we  observe that we have the following decomposition:
\[
\begin{aligned}
P(M_\infty \in I^K_{n,t,1-\alpha}\, |\, \mathcal{G}_n) &&=&\ 
P(M_\infty \in I^K_{n,t,1-\alpha}\, |\, \mathcal{G}_n, 0<M_\infty<1)P(0<M_\infty<1\, |\, \mathcal{G}_n)\\
&&+&\ 
P(M_\infty \in I^K_{n,t,1-\alpha}\, |\, \mathcal{G}_n, M_\infty=0)P(M_\infty=0\, |\, \mathcal{G}_n)\\
&&+&\ 
P(M_\infty \in I^K_{n,t,1-\alpha}\, |\, \mathcal{G}_n, M_\infty=1)P(M_\infty=1\, |\, \mathcal{G}_n)\\
&&=&\ 
P(M_\infty \in I^K_{n,t,1-\alpha}\, |\, \mathcal{G}_n, 0<M_\infty<1)P_{(0,1),n}\\
&&+&\ 
P(0 \in I^K_{n,t,1-\alpha}\, |\, \mathcal{G}_n)P_{0,n}\\
&&+&\ 
P(1 \in I^K_{n,t,1-\alpha}\, |\, \mathcal{G}_n)P_{1,n},\\
\end{aligned}
\] 
where we have used that $I^K_{n,t,1-\alpha}$ is based only on $\mathcal{G}_n$ and so 
it does not depend on $M_\infty$. Now, we can consider the consistent estimators, $U'^K_{0,n,t}$ and  
$U'^K_{1,n,t}$, defined in the previous section, and $U'^K_{(0,1),n,t}=1-U'^K_{0,n,t}-U'^K_{1,n,t}$
\footnote{If the time $t>n$ used in the simulations is not high enough to make
these estimators accurate, it is possible that $U'^K_{0,n,t}+U'^K_{1,n,t}>1$;
in that case, we can replace $U'^K_{0,n,t}$ by $\frac{U'^K_{0,n,t}}{U'^K_{0,n,t}+U'^K_{1,n,t}}$, 
$U'^K_{1,n,t}$ by $\frac{U'^K_{1,n,t}}{U'^K_{0,n,t}+U'^K_{1,n,t}}$ and $U'^K_{(0,1),n,t}$ by $0$.}. 
Moreover, let us denote by $I^K_{(0,1),n,t,\theta}$
the asymptotic confidence interval of level $\theta$ for $M_\infty$, based on $\mathcal{G}_n$,  
when $0<M_\infty<1$ (see Subsection \ref{sec:interval_distrib_M_t_c} for details).
Then, we can give the following definition:

\begin{definition}\label{def:confidence:interval}
The asymptotic confidence interval $I^K_{n,t,1-\alpha}$ for $M_\infty$,  
based on $\mathcal{G}_n$,   
is defined as follows:
\begin{itemize}
\item[(1)] $I^K_{n,t,1-\alpha}=\{0\}$, if $U'^K_{0,n,t}\geq 1-\alpha$;
\item[(2)] $I^K_{n,t,1-\alpha}=\{1\}$, if $U'^K_{1,n,t}\geq 1-\alpha$;
\item[(3)] $I^K_{n,t,1-\alpha}= I^K_{(0,1),n,t,\frac{1-\alpha}{U'^K_{(0,1),n,t}}}$, if $U'^K_{(0,1),n,t}\geq 1-\alpha$;
\item[(4)] $I^K_{n,t,1-\alpha}=\{0\}\cup\{1\}$, \\
if $\max\{U'^K_{0,n,t},U'^K_{1,n,t}\}<1-\alpha$,
$U'^K_{0,n,t}+U'^K_{1,n,t}\geq 1-\alpha$ and 
$U'^K_{(0,1),n,t}< \min\{U'^K_{0,n,t},U'^K_{1,n,t}\}$;
\item[(5)] $I^K_{n,t,1-\alpha}=\{0\}\cup I^K_{(0,1),n,t,\frac{1-\alpha-U'^K_{0,n,t}}{U'^K_{(0,1),n,t}}}$, \\
if $\max\{U'^K_{0,n,t},U'^K_{(0,1),n,t}\}<1-\alpha$, 
$U'^K_{0,n,t}+U'^K_{(0,1),n,t}\geq 1-\alpha$ and
$U'^K_{1,n,t}< \min\{U'^K_{0,n,t},U'^K_{(0,1),n,t}\}$;
\item[(6)] $I^K_{n,t,1-\alpha}=\{1\}\cup I^K_{(0,1),n,t,\frac{1-\alpha-U'^K_{1,n}}{U'^K_{(0,1),n,t}}}$, \\
if $\max\{U'^K_{(0,1),n,t},U'^K_{1,n,t}\}<1-\alpha$, $U'^K_{(0,1),n,t}+U'^K_{1,n,t}\geq 1-\alpha$ and
$U'^K_{0,n,t}< \min\{U'^K_{(0,1),n,t},U'^K_{1,n,t}\}$;
\item[(7)] $I^K_{n,t,1-\alpha}=\{0\}\cup\{1\}\cup I^K_{(0,1),n,t,\frac{1-\alpha-U'^K_{0,n,t}-U'^K_{1,n,t}}{U'^K_{(0,1),n,t}}}$,
if all the above conditions do not hold.
\end{itemize}
\end{definition}

We notice that $I^K_{n,t,1-\alpha}$ depends on the $\mathcal{G}_n-$measurable  random variables 
$U'^K_{0,n,t}$, $U'^K_{1,n,t}$ and $U'^K_{(0,1),n,t}$,
which means that the specific form of the interval, i.e. (1)-(7), is selected 
based on the information collected until the time-step $n$.
In addition, we note that the level $\theta$ of the interval $I^K_{(0,1),n,t,\theta}$ is not always the same, 
as it is set so that the global level of the interval $I^K_{n,t,1-\alpha}$ attains (asymtotically in $t$ and $K$) the nominal level $1-\alpha$.
Indeed, if we denote as $C_{n,1-\alpha}$ the asymptotic (in $K$ and $t$) coverage of the interval $I^K_{n,t,1-\alpha}$, i.e. 
$$
C_{n,1-\alpha}\stackrel{a.s.}=\lim_{t,K\rightarrow\infty} P(M_\infty \in I^K_{n,t,1-\alpha} | \mathcal{G}_n),
$$
we can show that, for any value of $P_{0,n}$, $P_{1,n}$ and $P_{(0,1),n}$, we always have $C_{n,1-\alpha}\geq 1-\alpha$.
To this end, we consider the following seven cases, where in each one
the interval $I^K_{n,t,1-\alpha}$ is the
one reported in the corresponding case of the previous Definition \ref{def:confidence:interval}, because it is exactly
the interval that is selected in that case when $t,\, K$ are large (since the choice is based on strongly consistent estimators of 
the probabilities $P_{0,n},\,P_{1,n}$ and $P_{(0,1),n}$): 
\begin{itemize}
\item[(1)] if $P_{0,n}\geq 1-\alpha$, $C_{n,1-\alpha} = 0+1\cdot P_{0,n}+0=P_{0,n} \geq 1-\alpha$;
\item[(2)] if $P_{1,n}\geq 1-\alpha$, $C_{n,1-\alpha} = 0+0+1\cdot P_{1,n}= P_{1,n} \geq 1-\alpha$;
\item[(3)] if $P_{(0,1),n}\geq 1-\alpha$, $C_{n,1-\alpha} = \frac{1-\alpha}{P_{(0,1),n}}\cdot P_{(0,1),n}+0+0 = 1-\alpha$;
\item[(4)] if $\max\{P_{0,n},P_{1,n}\}<1-\alpha$, $P_{0,n}+P_{1,n}\geq 1-\alpha$ and 
$P_{(0,1),n}< \min\{P_{0,n},P_{1,n}\}$,\\
$C_{n,1-\alpha} = 0 + 1\cdot P_{0,n} + 1\cdot P_{1,n} = P_{0,n}+P_{1,n} \geq 1-\alpha$;
\item[(5)] if $\max\{P_{0,n},P_{(0,1),n}\}<1-\alpha$,
$P_{0,n}+P_{(0,1),n}\geq 1-\alpha$ and 
$P_{1,n}< \min\{P_{0,n},P_{(0,1),n}\}$,\\
$C_{n,1-\alpha} = \frac{1-\alpha-P_{0,n}}{P_{(0,1),n}}\cdot P_{(0,1),n} + 1\cdot P_{0,n} + 0= 1-\alpha$;
\item[(6)] if $\max\{P_{(0,1),n},P_{1,n}\}<1-\alpha$,
$P_{(0,1),n}+P_{1,n}\geq 1-\alpha$ and 
$P_{0,n}< \min\{P_{(0,1),n},P_{1,n}\}$, \\
$C_{n,1-\alpha} = \frac{1-\alpha-P_{1,n}}{P_{(0,1),n}}\cdot P_{(0,1),n} + 0 + 1\cdot P_{1,n}= 1-\alpha$;
\item[(7)] if all the above conditions do not hold,\\
$C_{n,1-\alpha} = \frac{1-\alpha-P_{0,n}-P_{1,n}}{P_{(0,1),n}}\cdot P_{(0,1),n} + 1\cdot P_{0,n} + 1\cdot P_{1,n}= 1-\alpha$.
\end{itemize}

%%%%%%%%%%%%%

\subsection{Confidence interval for the case $0<M_\infty<1$}\label{sec:interval_distrib_M_t_c}

In this section we illustrate the details concerning the asymptotic confidence interval $I^K_{(0,1),n,t,\theta}$ of level $\theta$ 
for $M_\infty$, given $0<M_\infty<1$,
that has been used above for the definition of the interval $I^K_{n,t,1-\alpha}$. 
To avoid unnecessary complications, here we focus on the case when the limit random variable $M_\infty$ 
has no atoms in $(0,1)$ (we recall that a set of sufficient conditions for this scenario are provided in 
Remark~\ref{rem-TLC-mart} of Section~\ref{sec-mart}, which are for instance verified for a network of RSPs as discussed 
 in Remark~\ref{rem-no-atoms-urns} at the end of Section~\ref{sec:barriers}).  
 The interval $I^K_{(0,1),n,t,\theta}$ is based on the quantiles of the conditional distribution of $M_t$ 
given $\mathcal{G}_n$ and $\{0<M_\infty<1\}$,
and we show that 
$$
\lim_{t,K\rightarrow +\infty} P(M_\infty \in I^K_{(0,1),n,t,\theta}\, |\, \mathcal{G}_n, 0<M_\infty<1)
\stackrel{a.s.}= \theta.
$$

First of all, we define the following conditional cumulative distribution functions: 
for any $x\in[0,1]$
\begin{equation*}
\begin{split}
&F_{(0,1),n,\infty}(x) = P(M_\infty \leq x\, |\, \mathcal{G}_n, 0<M_\infty<1)\quad a.s. 
\qquad\mbox{and}\qquad\\
&F_{(0,1),n,t}(x) = P(M_t \leq x\, |\, \mathcal{G}_n, 0<M_\infty<1)\quad a.s.
\end{split}
\end{equation*}
Because of the assumption that $M_\infty$ has no atoms in $(0,1)$, then $F_{(0,1),n,\infty}$ and its inverse are continuous. Moreover, we define 
 the corresponding quantiles of order $\alpha/2$ and $1-\alpha/2$: 
$$
q_{(0,1),n,\infty, \frac{\alpha}{2}} = F_{(0,1),n,\infty}^{-1}\left(\frac{\alpha}{2}\right)
\qquad\mbox{and}\qquad
q_{(0,1),n,\infty, 1-\frac{\alpha}{2}} = F_{(0,1),n,\infty}^{-1}\left(1-\frac{\alpha}{2}\right),
$$
\begin{equation*}
\begin{split}
q_{(0,1),n,t, \frac{\alpha}{2}}&= \min\left\{ x\in[0,1]:\, F_{(0,1),n,t}(x) \geq \frac{\alpha}{2} \right\}
\quad\mbox{and}\\
q_{(0,1),n,t, 1-\frac{\alpha}{2}}&= \min\left\{ x\in[0,1]:\, F_{(0,1),n,t}(x) \geq 1-\frac{\alpha}{2} \right\}.
\end{split}
\end{equation*}
Now, we can set $I_{(0,1),n,t,\theta}$ equal to the interval with extremes $q_{(0,1),n,t, \frac{1-\theta}{2}}$ and 
$q_{(0,1),n,t, \frac{1+\theta}{2}}$, so that we have 
\begin{multline*}
\lim_{t\rightarrow+\infty}P(M_\infty\in I_{(0,1),n,t,\theta}\, |\, \mathcal{G}_n, 0<M_\infty<1) =\\
\begin{split}
 &\lim_{t\rightarrow+\infty}
 P(q_{(0,1),n,t, \frac{1-\theta}{2}}\leq M_\infty \leq q_{(0,1),n,t, \frac{1+\theta}{2}} \, |\, \mathcal{G}_n, 0<M_\infty<1)
\stackrel{a.s.}=\\ 
& 1 - \lim_{t\rightarrow+\infty}\left[
 F_{(0,1),n,\infty}(q_{(0,1),n,t, \frac{1-\theta}{2}})+ 
 1-F_{(0,1),n,\infty}(q_{(0,1),n,t, \frac{1+\theta}{2}})\right]
=\\
&1 - \left[
 F_{(0,1),n,\infty}(q_{(0,1),n,\infty, \frac{1-\theta}{2}})+
 1-F_{(0,1),n,\infty}(q_{(0,1),n,\infty, \frac{1+\theta}{2}})\right]=\\
&1-\frac{1-\theta}{2}-1+\frac{1+\theta}{2} = \theta\,,
\end{split}
\end{multline*}
where we have used the continuity of $F_{(0,1),n,\infty}$ and the fact that 
$q_{(0,1),n,t, \frac{1-\theta}{2}}\to q_{(0,1),n,\infty, \frac{1-\theta}{2}}$ and 
$q_{(0,1),n,t, \frac{1+\theta}{2}}\to q_{(0,1),n,\infty, \frac{1+\theta}{2}}$ as $t\to +\infty$  
(because $M_t\stackrel{a.s.}\to M_\infty$ and $F^{-1}_{(0,1),n,\infty}$ is continuous and 
hence the pseudo-inverse of $F_{(0,1),n,t}$ converges pointwise 
to $F^{-1}_{(0,1),n,\infty}$ for $t\to +\infty$).

\begin{remark}\label{rem:level_without_conditioning}
It is worth highlighting that if we had not conditioned on $\{0<M_\infty<1\}$,
then the cumulative distribution function $F_{n,\infty}(\cdot)$ of $M_\infty$ given $\mathcal{G}_n$
would not be continuous at $0$ and $1$, and so
the probability that $M_\infty$ lies within the quantiles, say $q_{n,t, \frac{1-\theta}{2}}$ and 
$q_{n,t, \frac{1+\theta}{2}}$, defined as $q_{(0,1),n,t, \frac{1-\theta}{2}}$ and 
$q_{(0,1),n,t, \frac{1+\theta}{2}}$ but without the conditioning on $\{0<M_\infty<1\}$, 
would not attain the desired level $\theta$. 
For instance, if $P_{0,n}> \frac{1-\theta}{2}$,  we have $q_{n,\infty, \frac{1-\theta}{2}}=0$, 
but, since $P(0<M_t<1|\mathcal{G}_n)=1$ for any $t$, we always have $q_{n,t, \frac{1-\theta}{2}}>0$, and 
hence\footnote{$F_{n,\infty}(x-)=
F_{n,\infty}(x)-P(M_\infty = x\, |\, \mathcal{G}_n)$.}
$F_{n,\infty}(q_{n,\infty,\frac{1-\theta}{2}}-)=0$, while 
$\lim_t F_{n,\infty}(q_{n,t,\frac{1-\theta}{2}}-)\geq P_{0,n}> 
\frac{1-\theta}{2}$.
Analogously, if $P_{1,n}> \frac{1-\theta}{2}$, we may show that 
$\lim_t 1-F_{n,\infty}(q_{n,t,\frac{1+\theta}{2}})\geq P_{1,n}> \frac{1-\theta}{2}$.
Therefore, we would have 
$\lim_{t\rightarrow\infty}P(q_{n,t, \frac{1-\theta}{2}}\leq M_\infty \leq q_{n,t, \frac{1+\theta}{2}}\, |\, \mathcal{G}_n)
< \theta$.
\end{remark}

In practice we have to estimate the cumulative distribution function 
of $M_t$ conditioned on $\mathcal{G}_n$ and $\{0<M_\infty<1\}$, 
i.e. the function $F_{(0,1),n,t}(\cdot)$ defined above, and
the corresponding quantiles $q_{(0,1),n,t, \frac{1-\theta}{2}}$ and $q_{(0,1),n,t, \frac{1+\theta}{2}}$ needed for 
$I_{(0,1),n,t,\theta}$. In other words, the interval $I^K_{(0,1),n,t,\theta}$ we are constructing corresponds to the interval 
$I_{(0,1),n,t,\theta}$, 
where we replace the two extremes $q_{(0,1),n,t, \frac{1-\theta}{2}}$ and $q_{(0,1),n,t, \frac{1+\theta}{2}}$ 
with their  corresponding estimates  $q^K_{(0,1),n,t, \frac{1-\theta}{2}}$ and $q^K_{(0,1),n,t, \frac{1+\theta}{2}}$. 
\\
\indent More precisely, we observe that, after some easy computations, 
we get for each  $x\in [0,1]$ and $t\geq n$  
$$
F_{(0,1),n,t}(x) = P(M_t \leq x\, |\, \mathcal{G}_n, 0<M_\infty<1)=
\frac{E[\mathbbm{1}_{\{M_t \leq x\}}P_{(0,1),t}\, |\, \mathcal{G}_n]}
{E[P_{(0,1),t}\, |\, \mathcal{G}_n]}\,,
$$
where we recall that $P_{(0,1),t}=P(0<M_\infty<1\, |\, \mathcal{G}_t)=1-P_{0,t}-P_{1,t}$.
Therefore, if we generate a large number $K\in\mathbb{N}$ of realizations of $M_t$  
based on the observed past $\mathcal{G}_n$,  we can approximate    
 $E[\mathbbm{1}_{\{M_t \leq x\}}P_{(0,1),t}\, |\, \mathcal{G}_n]$ 
 by 
$\frac{1}{K}\sum_{j=1}^K \mathbbm{1}_{\{M^j_t \leq x\}}P^j_{(0,1),t}$ 
and  $E[P_{(0,1),t}\, |\, \mathcal{G}_n]=P_{(0,1),n}$  by 
$\frac{1}{K}\sum_{j=1}^K P^j_{(0,1),t}$ (for $K$ large), where $P^j_{(0,1),t}$ can be approximated 
by $U^j_{(0,1),t}$ (for $t$ large), and so 
we can estimate $F_{(0,1),n,t}(x)$ by means of the strongly consistent estimator  
$$
F^K_{(0,1),n,t}(x)=\frac{\sum_{j=1}^K \mathbbm{1}_{\{M^j_t \leq x\}} U^j_{(0,1),t}}
{\sum_{j=1}^K U^j_{(0,1),t}}.
$$
Finally, from the estimator $F^K_{(0,1),n,t}(\cdot)$ we can derive the quantiles 
$q^K_{(0,1),n,t, \frac{1-\theta}{2}}$ and $q^K_{(0,1),n,t, \frac{1+\theta}{2}}$ needed 
for the asymptotic confidence interval $I^K_{(0,1),n,t, \theta}$.

\begin{remark}\label{rem:asymptotic_K_t}
It is important to highlight that the confidence interval $I^K_{(0,1),n,t,\theta}$ 
is to be intended asymptotically in $t$ and $K$, but not in $n$. 
Then, since $n$ represents the data size while $t$ and $K$ represent the simulation size, this means that
improving the confidence of the above interval in order to achieve the desired nominal level $\theta$ 
consists only in a computational cost but not 
in the practical cost of collecting new data.
\end{remark}

%%%%%%%%%%%%%%%%%%%%%%%%%%%%%%%%%%%%%%%

\section{Application to a network of reinforced stochastic processes}
\label{sec:RSPs_simulations}

Consider the setting described in Sections~\ref{intro} and~\ref{sec:barriers}. 
We can apply the proposed methodology to the martingale $\widetilde{Z}=(\widetilde{Z}_n)_n$ (see Sec.~\ref{sec:barriers}) 
with the filtration ${\mathcal G}_n={\mathcal F}_n$ defined in Section~\ref{intro}. Indeed, as observed in
Section \ref{sec:barriers}, 
in the case of complete almost sure asymptotic synchronization, 
the random variable $Z_\infty$ can be seen as the almost sure limit of $\widetilde{Z}_n$ and, when
$\sum_n r_n^2 < +\infty$, the 
random variable $U'^K_{z,n,t}$, with $z\in\{0,1\}$, as defined in Section \ref{sec:estimation}
provides a strongly consistent estimator for the probability $P_{z,n}$ that 
$Z_\infty=z$ given the observed past ${\mathcal F}_n$ until time-step $n$, 
i.e.~given the observation of the system until time-step $n$.   
These estimators can be used in applications in order to predict how it is likely 
to have the asymptotic polarization of the network agents 
and then providing a confidence interval for $Z_\infty$, 
given the observation of the system until a certain time-step $n$ 
and provided that the sequence $(r_n)_n$ is known. \\
\indent For instance, let us focus on the case when    
$\lim_n n^{\gamma} r_n = c$ and $\sum_n (r_n - cn^{-\gamma})$ is convergent, 
with $c>0$ and $1/2<\gamma<1$, for which we know that 
$0<P_{z,n}<1$ (see Table \ref{table-summary} in Section \ref{intro}). 
Then, in Figure~\ref{fig:estimation}, for a given choice of $\gamma$ and $c$,  
we have simulated $K=100$ realizations 
of the network until a certain time-step $n$ and, for each of them, we have plotted 
$U'^K_{z,n,t}$ for a chosen $t>n$ 
that satisfies the lower bound provided in the practical guidelines of
Section~\ref{app:guideline_time_t} of the Appendix.

\begin{figure}[htp]
\centering
\includegraphics[scale=0.40]{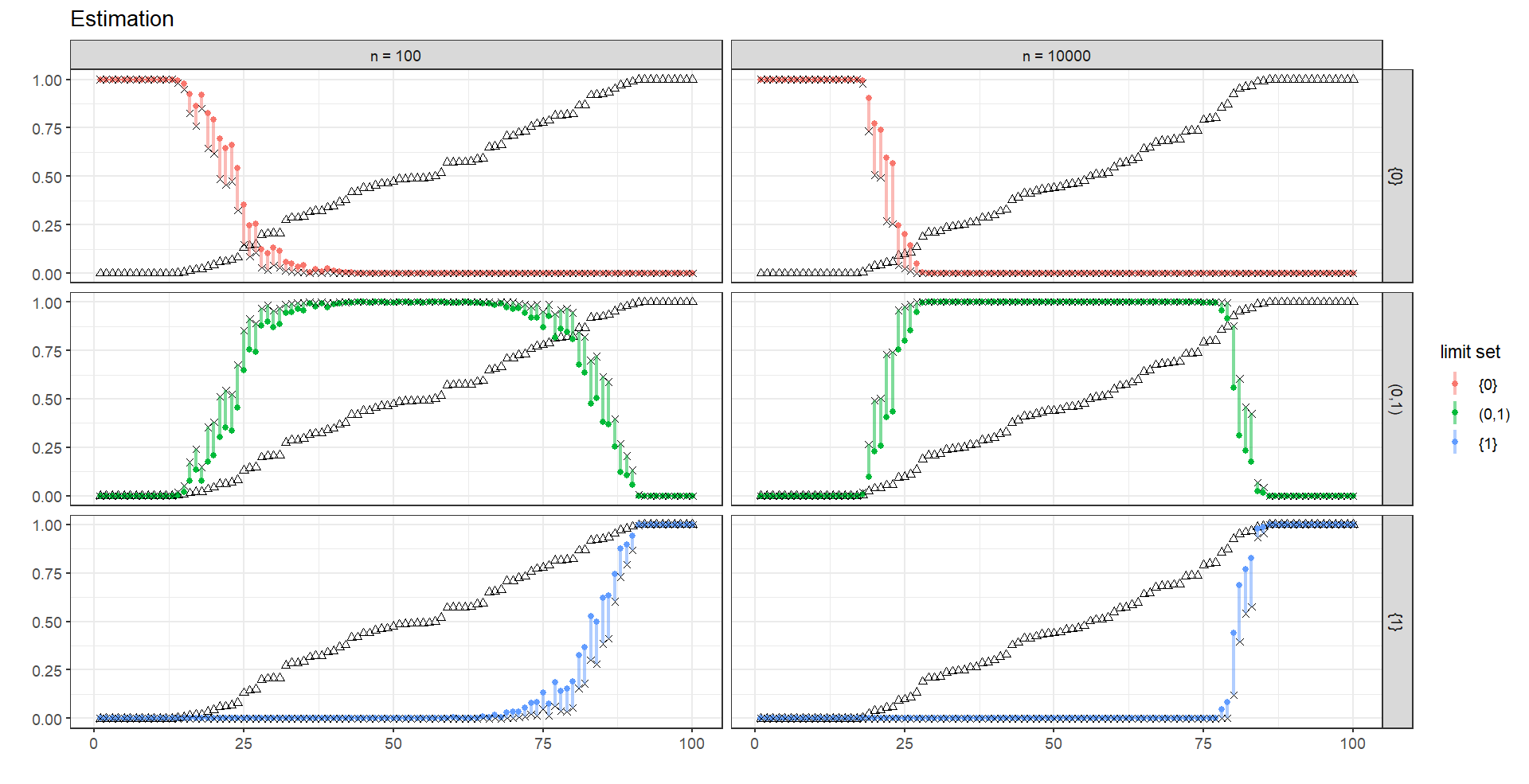}%[height=0.4\textheight, width=1\textwidth]{figure_estimation}
\caption[]{
Model parameters: $N=3$, $r_n=\frac{c}{(0.1+n)^{\gamma}}$ with $c=1$ and $\gamma=0.75$,
the interaction matrix $W$ is of the mean-field type, precisely 
$w_{l_1,l_2}=\frac{1}{2N} + \frac{1}{2} \delta_{l_1,l_2}$, 
with initial condition $\vZ_0=\frac{1}{2}\vone$. Number of simulations $S=10^2$.\\
\noindent Parameters for the estimating procedure: 
$K=10^2$,
$n=10^2, 10^4$ and $t=n+10^4$ (that satisfies the guidelines  of
Section~\ref{app:guideline_time_t} of the Appendix, e.g. with $\eta=0.2$ and $\epsilon=0.05$).    
In each panel identified by $n$ and a limit set $z=\{0\},(0,1),\{1\}$, 
the triangles correspond to the 
values of $\widetilde{Z}_n$ for the different $S$ simulations. The circular dots
represent the estimate $U'^K_{z,n,t}$ for the different $S$ simulations.
The crosses represent the target values $P_{z,n}\stackrel{a.s.}=\lim_{K,t\rightarrow+\infty}U'^K_{z,n,t}$,
here estimated as $U'^K_{z,n,10^5}$, for the different $S$ simulations.
The vertical segments indicate the differences between the estimates and the corresponding 
target values.
} 
\label{fig:estimation}
\end{figure}

Notice in Figure \ref{fig:estimation} 
how the estimates change according to the values of $\widetilde{Z}_n$ observed in the
different simulations: 
when $\widetilde{Z}_n$ is very close to a barrier, $z=0$ or $z=1$, 
then the corresponding $U'^K_{z,n,t}$ is close to one as well, 
while when $\widetilde{Z}_n$ lies within (0,1) and is far from the barriers, 
then both $U'^K_{0,n,t}$ and $U'^K_{1,n,t}$ are very small, 
so leading $U'^K_{(0,1),n,t}$ to be large instead.
In addition, notice how the estimates of $U'^K_{z,n,t}$ get
more extreme with a larger value of $n$, since
the probability of polarization itself
gets close to 1 or 0 with more observations, i.e.
$U'^K_{z,n,t}\stackrel{a.s.}\longrightarrow_{t,K} P_{z,n}
\stackrel{a.s.}\longrightarrow_n I_z(Z_\infty)\in\{0,1\}$.\\

Figure \ref{fig:confidence_interval} is focused on the
construction of a confidence interval for $Z_\infty$
in the same framework as considered in Figure \ref{fig:estimation}.
In particular, it shows how the asymptotic confidence interval $I^K_{n,t,1-\alpha}$
presented in Section \ref{sec:interval} can be composed by different disjoint sets: $\{0\}$, $\{1\}$
and $I^K_{(0,1),n,t,\theta}$, i.e. the two barriers and the interval for the case $0<Z_\infty<1$.
The specific form assumed by the interval depends on the observation of the system until the time-step $n$, and 
in particular on $\widetilde{Z}_n$.
Indeed, from Figure \ref{fig:confidence_interval} it is evident that when 
$\widetilde{Z}_n$ is very close to a barrier, $z=0$ or $z=1$, 
the interval is only made by that barrier $z$, while when
$\widetilde{Z}_n$ remains inside $(0,1)$ and is far from the barriers
we get a more classical two-sided interval that excludes them.
Naturally, with a larger value of $n$ 
we observe more intervals composed by a single set,
which can be $\{0\}$, $\{1\}$ or the interval $I^K_{(0,1),n,t,\theta}$,
that gets narrower
as a natural consequence of the fact that
$\widetilde{Z}_n\stackrel{a.s.}\longrightarrow Z_\infty$.

\begin{figure}[htp]
\centering
\includegraphics[scale=0.40]{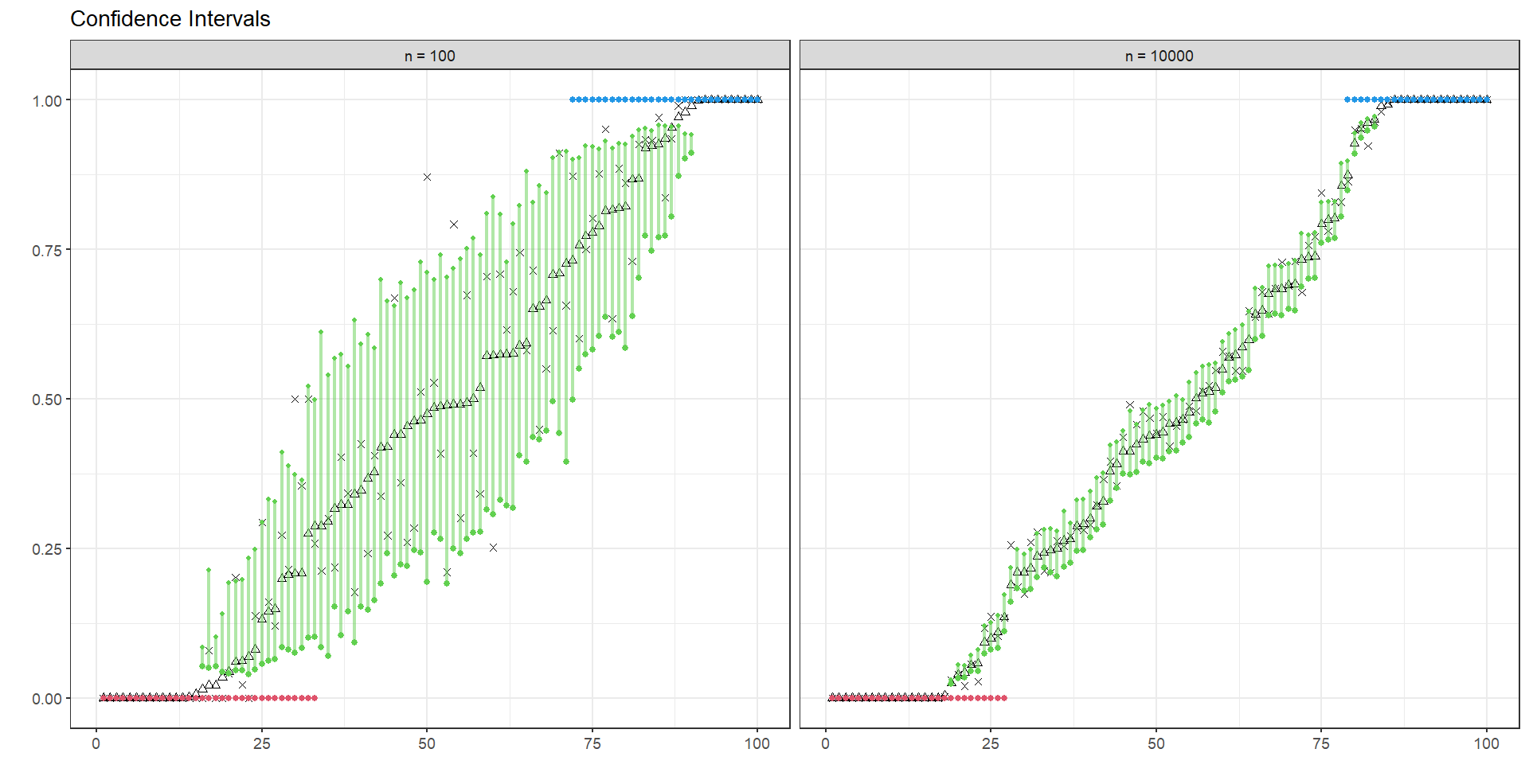}
%[height=0.4\textheight, width=1\textwidth]{figure_confidence_interval}
\caption[]{
Model parameters: $N=3$, $r_n=\frac{c}{(0.1+n)^{\gamma}}$ with $c=1$ and $\gamma=0.75$,
the interaction matrix $W$ is of the mean-field type, precisely 
$w_{l_1,l_2}=\frac{1}{2N} + \frac{1}{2} \delta_{l_1,l_2}$, 
with initial condition $\vZ_0=\frac{1}{2}\vone$. Number of simulations $S=10^2$.\\
\noindent Parameters for the estimating procedure: 
$K=10^2$,
$n=10^2, 10^4$, $t=n+10^4$ (that satisfies the guidelines  of
Section~\ref{app:guideline_time_t} of the Appendix, e.g. with $\eta=0.2$ and $\epsilon=0.05$) and confidence level $1-\alpha=0.95$.
In each panel identified by $n$, the (possible) three parts that can compose the confidence interval
$I^K_{n,t,1-\alpha}$ are reported: the set $\{0\}$ (red), the set $\{1\}$ (blue) and
the interval $I^K_{(0,1),n,t,\theta}$ (green). The triangles correspond to the 
values of $\widetilde{Z}_n$ for the different $S$ simulations. The crosses represent the target values 
$Z_\infty\stackrel{a.s.}=\lim_{t\rightarrow+\infty}\widetilde{Z}_t$,
here estimated as $\widetilde{Z}_{10^5}$, for the different $S$ simulations.
} 
\label{fig:confidence_interval}
\end{figure}

\begin{remark}\rm
We observe that the estimating procedure for the probability of asymptotic polarization 
and the construction of the asymptotic confidence interval for $Z_\infty$ can be adapted 
to the case when only the asymptotic behaviour of $(r_n)$ is known and 
the observable variables are the agents' actions $X_{n,\ell}$. In that case, 
we have to replace the random variables $Z_{n,\ell}$ by the empirical means 
$\sum_{k=1}^n X_{k,\ell}/n$ or by suitable weighted empirical means 
\cite{ale-cri-ghi-MEAN, ale-cri-ghi-WEIGHT-MEAN}. 
\\[15pt]
\end{remark}

%%%%%%%%%%%%%%%%%%%%%%%%%%%%

\noindent{\bf Declaration}\\
\noindent All the authors contributed equally to the present work.\\

\noindent{\bf Funding Sources}\\
\noindent Irene Crimaldi is partially supported by the Italian ``Programma di Attività
Integrata” (PAI), project “TOol for Fighting FakEs” (TOFFE) funded by IMT
School for Advanced Studies Lucca. Giacomo Aletti and Irene Crimaldi are
partially supported by the project ``Optimal and adaptive designs for modern
medical experimentation” funded by the Italian Government (MIUR, PRIN 2022).
\\

\noindent{\bf Acknowledgements}\\
\noindent The authors 
thank the referees for carefully reading the manuscript and 
for their suggestions to improve it.

%%%%%%%%%%%%%%%%%%%%%%%%%%%%%%%%%%%%%%%
%% The Appendices part is started with the command \appendix;
%% appendix sections are then done as normal sections
%% \appendix

%% \section{}
%% \label{}

\appendix

\renewcommand{\thesection}{\Alph{section}}

\section{Some auxiliary results}\label{app:proofs}

We start with the proof of Lemma~\ref{lem-equivalent}. 

\begin{proof}[Proof of Lemma~\ref{lem-equivalent}]
a) In order to prove \eqref{eq-div-serie}, 
we note that 
$$
\sum_{n\geq 0} \frac{r_n}{\sum_{k=0}^n r_k}
= 1 +
\sum_{n \geq 1} \frac{r_{n}}{\sum_{k=0}^{n} r_k}
$$
and, since $\sum_{k=0}^{n-1} r_k/\sum_{k=0}^n r_k=(1+r_n/\sum_{k=0}^{n-1}r_k)^{-1}\to 1$ when $\sum_n r_n=+\infty$, 
we have that the series   
$\sum_{n \geq 1} \frac{r_{n}}{\sum_{k=0}^{n} r_k}$ has the same behaviour as the series 
$\sum_{n \geq 1} \frac{r_{n}}{\sum_{k=0}^{n-1} r_k}$ (i.e.~they are both convergent or both divergent). 
In order to prove that the last series diverges to $+\infty$, we observe that  
\begin{align*}
\sum_{n \geq 1} \frac{r_{n}}{\sum_{k=0}^{n-1} r_k}
=\sum_{n \geq 1} \int_{\sum_{k=0}^{n-1}r_k}^{\sum_{k=0}^{n}r_k}\frac{1}{\sum_{k=0}^{n-1} r_k}dx
\geq \sum_{n \geq 1} \int_{\sum_{k=0}^{n-1}r_k}^{\sum_{k=0}^{n}r_k}\frac{1}{x}dx=\int_{r_0}^{+\infty} \frac{1}{x}\, dx =+\infty.
\end{align*}

\noindent
b) The relation \eqref{eq-equivalence-sumr-logs} follows by combining the following inequalities:
\begin{equation}\label{eq-b-prima}
\begin{split}
\sum_{k=0}^n r_k&=\sum_{k=0}^n\frac{\alpha_{k+1}}{s_{k+1}}=\sum_{k=0}^n\int_{s_k}^{s_{k+1}} \frac{1}{s_{k+1}}\,dx
\\
&\leq \sum_{k=0}^n\int_{s_k}^{s_{k+1}} \frac{1}{x}\,dx=\int_{s_0}^{s_{n+1}} \frac{1}{x}\,dx =\ln (s_{n+1})-\ln(s_0)
\leq \ln(s_{n+1})
\end{split}
\end{equation}
and
\begin{equation}\label{eq-div-serie-3}
\begin{split}
\frac{1}{1-\sup_n r_n}\sum_{k=0}^n r_k& 
\geq \sum_{k=0}^n \frac{r_k}{1-r_k}=\sum_{k=0}^n\frac{\alpha_{k+1}}{s_{k}}=\sum_{k=0}^n\int_{s_k}^{s_{k+1}} \frac{1}{s_{k}}\,dx
\\
&\geq \sum_{k=0}^n\int_{s_k}^{s_{k+1}} \frac{1}{x}\,dx=
\int_{s_0}^{s_{n+1}} \frac{1}{x}\,dx =
\ln (s_{n+1})-\ln(s_0).
\end{split}
\end{equation}
Indeed, we get 
\begin{equation}\label{eq-new}
(1-\sup_n r_n)\left(1 - \frac{\ln(s_0)}{\ln(s_{n+1})}\right)\leq 
\frac{\sum_{k=0}^n r_k}{\ln(s_{n+1})} \leq 1 
\end{equation}
and, since the limit of $\ln(s_0)/\ln(s_{n+1})$ is strictly smaller than $1$ if $\sup_n r_n>0$ (i.e.~if 
the numbers $\alpha_n$ are not all equal to zero),   
there exists a real constant $c>0$ such that 
$$
c\leq \liminf_n \frac{\sum_{k=0}^n r_k}{\ln(s_{n+1})} \leq \limsup_n \frac{\sum_{k=0}^n r_k}{\ln(s_{n+1})} \leq 1\,.
$$
\noindent 
c) It is immediate to see that \eqref{eq-cond-zero} implies \eqref{eq-cond-zero2} since
$(1-x)\leq e^{-x}$ $\forall x>0$. The converse is also trivial when $\sum_n r_n < +\infty$.
Indeed, \eqref{eq-cond-zero2} is always verified when
$\sum_n r_n < +\infty$ and
$\sum_n r_n < +\infty$ trivially implies 
$\sum_n r_n^2 < +\infty$ since $r_n< 1$ $\forall n$.
\\
\indent Now, we focus on the case when $\sum_n r_n = +\infty$. In that case, there exists $N>0$ such that
$\sum_{k=0}^{N-1} r_k \geq 1$. Then, since $x \mapsto xe^{-x}$ is decreasing on $[1,\infty)$, under \eqref{eq-cond-zero2}
we obtain
\[
\begin{aligned}
\sum_{n\geq N} r_n^2 &\leq \sum_{n\geq N} r_n \Big( C e^{-\sum_{k=0}^n r_k} \sum_{k=0}^n r_k  \Big) = 
C \sum_{n\geq N} \int_{\sum_{k=0}^{n-1} r_k}^{\sum_{k=0}^n r_k} e^{-\sum_{k=0}^n r_k} \sum_{k=0}^n r_k dx\\
&\leq
C \sum_{n\geq N}\int_{\sum_{k=0}^{n-1} r_k}^{\sum_{k=0}^n r_k}e^{-x}xdx 
\leq C \int_1^{+\infty}e^{-x}xdx  < +\infty,
\end{aligned}
\]
and so the first part of point c) is proven, 
that is \eqref{eq-cond-zero2} implies $\sum_n r_n^2<+\infty$ (and so $r_n\to 0$). 
As a consequence of this first result, we have 
$$
\sum_{k=0}^n \frac{r_k}{1-r_k}-\sum_{k=0}^n r_k = \sum_{k=0}^n \frac{r_k^2}{1-r_k}<+\infty\,. 
$$
Therefore, using the inequality \eqref{eq-div-serie-3} for $\sum_{k=0}^n \frac{r_k}{1-r_k}$, we obtain 
$$
\sum_{k=0}^n r_k \geq \ln (s_{n+1})-\ln(s_0)-\sum_{k=0}^n \frac{r_k^2}{1-r_k}\,,
$$
that is $s_{n+1}=O\left(\exp(\sum_{k=0}^n r_k)\right)$. Then, \eqref{eq-cond-zero2} also implies \eqref{eq-cond-zero}: 
indeed, for suitable real constants $C_1>0$ and $C_2>0$, we have 
\begin{align*}
\frac{r_n}{\prod_{k=0}^{n}(1-r_k)} & = r_n \frac{s_{n+1}}{s_0} \\
& \leq \Big( C_1 e^{-\sum_{k=0}^n r_k}  \sum\nolimits_{k=0}^n r_k \Big) 
( C_2 e^{\sum_{k=0}^n r_k } ) \leq C_1C_2 \sum\nolimits_{k=0}^n r_k .
\end{align*}
Recalling that 
$\alpha_{n+1}=s_0\frac{r_n}{\prod_{k=0}^n (1-r_k)}$ and  using \eqref{eq-new}, we get that 
\eqref{eq-cond-zero} is equivalent to \eqref{eq-cond-zero3}, i.e.~$\alpha_{n+1}=O(\ln(s_{n+1}))$. 
Finally, recalling that 
$s_{n+1}=s_0/\prod_{k=0}^n (1-r_k)$ and using \eqref{eq-new}, 
we also obtain that \eqref{eq-cond-zero} is equivalent to \eqref{eq-cond-zero4}, 
i.e.~$r_n=O(\ln(s_{n+1})/s_{n+1})$. 
\end{proof}

For reader's convenience, we here recall some general results. 
In particular, the first one is a little generalization of \cite[Lemma A.4]{ale-cri-ghi}. 

\begin{lemma}\label{lemma-product} 
Given $1/2<\gamma\leq 1$ and $c>0$, let $(r_n)_n$ be a sequence of
real numbers such that $0\leq r_n<1$, 
\begin{equation*}
\lim_n n^\gamma r_n=c\quad\mbox{ and }\quad 
\sum_n (r_n-cn^{-\gamma})\;\mbox{ is convergent}\,. 
\end{equation*}
Then we have 
\begin{equation*}
\prod_{k=0}^n (1-r_k)\ =\ \begin{cases}
O\left(\exp \left[ -\frac{c }{1-\gamma}n^{1-\gamma} \right]\right)
& \mbox{for } 1/2<\gamma<1, \\
O\left(n^{- c}\right) & \mbox{for } \gamma=1\,.
\end{cases}
\end{equation*}
%and
%\begin{equation*}
%\prod_{k=0}^n (1-r_k)^{-1}\ =\ \begin{cases}
%O\left(\exp \left[ \frac{c }{1-\gamma}n^{1-\gamma} \right]\right)
%& \mbox{for } 1/2<\gamma<1, \\
%O\left(n^{c}\right) & \mbox{for } \gamma=1.
%\end{cases}
%\end{equation*}
\end{lemma}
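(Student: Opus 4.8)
The plan is to pass to logarithms and reduce everything to the asymptotics of the partial sums $\sum_{k=0}^n r_k$. First I would write
$$\ln \prod_{k=0}^n (1-r_k) = \sum_{k=0}^n \ln(1-r_k),$$
and use that $r_n \to 0$ (immediate from $n^\gamma r_n \to c$ with $\gamma>0$), so that for all sufficiently large $k$ the Taylor estimate $|\ln(1-r_k) + r_k| \leq C r_k^2$ holds with a universal constant $C$; the finitely many remaining terms contribute only a bounded quantity, since each $r_k<1$ makes $\ln(1-r_k)$ finite, and the $k=0$ term is a harmless constant. Hence, provided $\sum_k r_k^2<+\infty$, one gets
$$\sum_{k=0}^n \ln(1-r_k) = -\sum_{k=0}^n r_k + O(1).$$

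The next step is to verify this summability and to reduce the main sum to a sum of pure powers. Since $r_k \sim c\,k^{-\gamma}$ and $2\gamma>1$ (this is exactly where the hypothesis $\gamma>1/2$ enters), the series $\sum_k r_k^2$ converges, so the error above is indeed $O(1)$. For the main sum I would invoke the hypothesis that $\sum_k (r_k - c\,k^{-\gamma})$ converges, which yields
$$\sum_{k=0}^n r_k = c \sum_{k=1}^n k^{-\gamma} + O(1).$$
It then remains to compute the asymptotics of $\sum_{k=1}^n k^{-\gamma}$ by comparison with $\int_1^n x^{-\gamma}\,dx$, the difference being bounded uniformly in $n$. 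For $\gamma=1$ this gives $\sum_{k=1}^n k^{-1} = \ln n + O(1)$, whence $\sum_{k=0}^n r_k = c\ln n + O(1)$ and, exponentiating, $\prod_{k=0}^n (1-r_k) = O(n^{-c})$. For $1/2<\gamma<1$ the integral equals $\tfrac{n^{1-\gamma}-1}{1-\gamma}$, so $\sum_{k=1}^n k^{-\gamma} = \tfrac{1}{1-\gamma}n^{1-\gamma} + O(1)$, giving $\sum_{k=0}^n r_k = \tfrac{c}{1-\gamma}n^{1-\gamma} + O(1)$ and hence $\prod_{k=0}^n (1-r_k) = O(\exp[-\tfrac{c}{1-\gamma}n^{1-\gamma}])$.

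The only delicate point is the bookkeeping of the $O(1)$ terms: I must make sure that each of the three error contributions — the logarithmic correction $\sum_k O(r_k^2)$, the tail of the convergent series $\sum_k (r_k - c\,k^{-\gamma})$, and the sum-versus-integral discrepancy — is genuinely bounded uniformly in $n$, so that after exponentiation it is absorbed into a multiplicative constant inside the $O(\cdot)$. The summability of $\sum_k r_k^2$ is the crucial ingredient and is precisely what fails at the boundary $\gamma=1/2$, which explains why the statement is restricted to $\gamma>1/2$.
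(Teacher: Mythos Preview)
Your proof is correct and complete: the passage to logarithms, the Taylor control of $\ln(1-r_k)+r_k$ by $r_k^2$, the use of $\gamma>1/2$ to ensure $\sum_k r_k^2<+\infty$, the replacement of $\sum_k r_k$ by $c\sum_k k^{-\gamma}$ via the convergence hypothesis, and the integral comparison for $\sum_{k=1}^n k^{-\gamma}$ are all carried out properly, with the $O(1)$ terms genuinely uniform in $n$. The paper itself omits the proof entirely, referring to \cite[Lemma~A.4]{ale-cri-ghi}; your argument is precisely the standard one that reference would contain, so there is nothing further to compare.
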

We omit the proof of the above lemma because it is exactly the same as the one of 
\cite[Lemma A.4]{ale-cri-ghi}.

\begin{theorem}[{\cite[Theorem~46, p.~40]{del-mey}}]\label{del-mey-result-app}
  Let $(Y_n)_n$ be a sequence of
  positive (\textit{i.e.}~non-negative) random variables, adapted to a
  filtration $\mathcal G$. Then the set $\{\sum_n
  E[Y_{n+1}\vert \mathcal{G}_{n}]<+\infty\}$ is almost surely contained in
  the set $\{\sum_n Y_n<+\infty\}$. If the random variables $Y_n$ are
  uniformly bounded by a constant, then these two sets are almost
  surely equal.
\end{theorem}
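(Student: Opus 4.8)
The plan is to reduce the statement to the classical comparison between an increasing adapted process and its predictable \emph{compensator}. First I would set $A_n = \sum_{k=0}^{n-1} Y_{k+1}$, which is adapted and non-decreasing, together with its compensator $B_n = \sum_{k=0}^{n-1} E[Y_{k+1}\mid \mathcal{G}_k]$, which is non-decreasing and \emph{predictable}: each increment $E[Y_{k+1}\mid\mathcal{G}_k]$ is $\mathcal{G}_k$-measurable, so $B_{n+1}$ is $\mathcal{G}_n$-measurable. The key point is that $M_n = A_n - B_n$ is a $\mathcal{G}$-martingale null at $0$, because $E[A_{n+1}-A_n\mid\mathcal{G}_n]=E[Y_{n+1}\mid\mathcal{G}_n]=B_{n+1}-B_n$. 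Since the monotone limits $A_\infty$ and $B_\infty$ exist in $[0,+\infty]$, the statement becomes: $\{B_\infty<+\infty\}\subseteq\{A_\infty<+\infty\}$ a.s.\ always, together with the reverse inclusion under a uniform bound.

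For the first inclusion I would localize along the levels of the compensator. For $c>0$ set $T_c=\inf\{n\geq 0:\,B_{n+1}>c\}$; this is a stopping time precisely because $B$ is predictable, and by construction $B_{n\wedge T_c}\leq c$ for every $n$. Using that $\{T_c>k\}\in\mathcal{G}_k$ together with the tower property, one gets $E[A_{n\wedge T_c}]=E[B_{n\wedge T_c}]\leq c$, and monotone convergence gives $E[A_{T_c}]\leq c<+\infty$, hence $A_{T_c}<+\infty$ a.s. Since $\{B_\infty\leq c\}\subseteq\{T_c=+\infty\}$, on that event $A_\infty=A_{T_c}<+\infty$ a.s.; letting $c\to+\infty$ along the integers yields $\{B_\infty<+\infty\}\subseteq\{A_\infty<+\infty\}$ up to a null set.

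For the converse, assuming $Y_n\leq L$ a.s.\ for a constant $L$, I would run the symmetric argument, now localizing along the levels of $A$ itself. Set $S_c=\inf\{n\geq 0:\,A_n>c\}$, a stopping time because $A$ is adapted, and note $\{S_c>k\}=\{A_k\leq c\}\in\mathcal{G}_k$. The boundedness enters here through the \emph{overshoot} estimate $A_{n\wedge S_c}\leq c+L$, valid because the single increment added at the crossing time is some $Y_{k+1}\leq L$. The same tower-property computation gives $E[B_{n\wedge S_c}]=E[A_{n\wedge S_c}]\leq c+L$, whence $B_{S_c}<+\infty$ a.s.; using $\{A_\infty\leq c\}\subseteq\{S_c=+\infty\}$ and letting $c\to+\infty$ gives $\{A_\infty<+\infty\}\subseteq\{B_\infty<+\infty\}$.

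I expect two delicate points. The minor one is integrability: rather than invoking optional stopping on $M$ abstractly (which would require $A_n\in L^1$), I would obtain each identity $E[A_{n\wedge T_c}]=E[B_{n\wedge T_c}]$ directly from the $\mathcal{G}_k$-measurability of $\{T_c>k\}$ and monotone convergence of the non-negative increments, so no integrability need be assumed for the $Y_n$. The genuine obstacle, and the structural reason the two inclusions are not symmetric, is the overshoot control in the converse: without a uniform bound the increment of $A$ at the crossing time $S_c$ can be arbitrarily large, so $E[A_{S_c}]$ may be infinite and $B_\infty$ can diverge even when $A_\infty<+\infty$. Isolating the boundedness hypothesis to exactly this step is the heart of the argument.
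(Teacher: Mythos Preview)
Your proof is correct and follows the classical localization argument. The paper itself does not supply a proof of this statement: it is quoted verbatim as an external result from Dellacherie--Meyer (Theorem~46, p.~40), so there is no ``paper's own proof'' to compare against. What you have written is essentially the standard proof one finds in that reference or in other martingale texts: decompose the increasing process into its compensator plus a martingale, localize by stopping at the first time the predictable part (respectively, the adapted part) exceeds a level, and use the tower property termwise to transfer the uniform bound from one side to the other. Your identification of the overshoot control as the precise place where the uniform bound on the $Y_n$ is needed is exactly right, and your care to avoid assuming $A_n\in L^1$ a priori---deriving $E[A_{n\wedge T_c}]=E[B_{n\wedge T_c}]$ directly from the $\mathcal{G}_k$-measurability of $\{T_c>k\}$ and monotone convergence---is the correct way to handle the integrability issue.
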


%%%%%%%%%%%%%%%%%%%%%%%%%
%%%%%%%%%%%%%%%%%%%%%%%%%%

\section{Practical guidelines on the choice of the time $t$}\label{app:guideline_time_t}

In this section we provide a practical guideline
on the choice of the time-step $t>n$ that has to be used in the above described estimating procedure.
\\
\indent For instance, consider the estimator $U'^K_{0,n,t}=\frac{1}{K}\sum_{j=1}^K U^j_{0,t}$ of $P_{0,t}$ 
(analogous arguments can be made for the estimator $U'^K_{1,n,t}$ of $P_{1,t}$) 
and focus on a single realization $U^j_{0,t}$
and, in particular, on its expression in \eqref{eq:Azuma}, where it is
defined as a function of the ratio $\sum_{k=t}^\infty r_k^2/(M_t^j)^2$. 
Since by Proposition \ref{thm:Azuma} $U^j_{0,t} \stackrel{a.s.}\longrightarrow_t I_{0}(M^j_\infty)$, 
this ratio must tend to infinity a.s. on the set $\{M^j_\infty =0\}$. On the other hand, 
since $\sum_n r_n^2<+\infty$, we obviously have that it tends to zero a.s. on the set $\{0<M^j_\infty\leq 1\}$. 
Moreover, we notice that $(M_t^j)^2$ is bounded by $1$, while
$\sum_{k=t}^\infty r_k^2$, although it always tends to zero, can be very large for small $t$ (much larger than $1$).
As a consequence, if $U^j_{0,t}$ is computed when the time-step $t$ is small, 
the ratio $\sum_{k=t}^\infty r_k^2/(M_n^j)^2$ (and so $U^j_{0,t}$) may be always large,
regardless of the value of $M^j_t$, i.e. regardless of the information 
contained in $\mathcal{G}_n$ used to generate $M^j_t$.
To address this  issue, we can impose some initial conditions in order to ensure that:
\begin{itemize}
\item[(a)] $U^j_{0,t}$ is small when $M^j_{t}$ is not very close to $0$;
\item[(b)] $U^j_{1,t}$ is small when $M^j_{t}$ is not very close to $1$.
\end{itemize} 
Formally, we can fix $\epsilon>0$ and $\eta>0$ such that
$U^j_{0,t}<\epsilon$ when $M^j_{t}>\eta$ and, analogously, 
$U^j_{1,t}<\epsilon$ when $M^j_{t}<1-\eta$.
These constraints provide us with a condition on the minimum time-step $t_{\min}$ that we should take 
to compute the previous estimators: indeed, we have that $t_{\min}$ should be the minimum integer $t$ such that
$$\exp\left(-2\frac{\eta^2}{\sum_{k=t}^\infty r_k^2}\right)<\epsilon,\ \mbox{that is}\ 
\sum_{k=t}^\infty r_k^2 < \frac{2\eta^2}{\ln\left(\frac{1}{\epsilon}\right)}.
$$

\section{Periodicity of a matrix}\label{app:periodic_matrix}

Let $A$ be a non-negative $N\times N$ square matrix such that $A^\top \vone = \vone$.
Then, for any element $l\in V=\{1,\dots, N\}$, we can define the \emph{period} of $l$, say $d(l)$, 
as the  greatest common divisor $n\in\mathbb{N}$ such that $A^n_{l,l} > 0$.
Then, if $A$ is also irreducible, all the elements will have the same period and so we can define the period of the matrix $A$ as 
$d=d(1)=\dots=d(N)$. A matrix with period $d=1$ is called \emph{aperiodic}, otherwise a matrix with period $d\geq2$ is called \emph{periodic}.

%\bibliographystyle{abbrv} 
%\bibliography{Ref_list-NEW-urns.bib} 

\end{document}